\documentclass[10pt]{amsart}
\usepackage[dvips]{graphicx}
\usepackage[latin1]{inputenc}
\usepackage{amsmath,amsthm,amsopn,amstext,amscd,amsfonts,amssymb}
\usepackage{amssymb}
\usepackage{amsfonts}
\usepackage{lscape}
\setcounter{MaxMatrixCols}{10}

\long
\def\salta#1{\relax}
\newcommand{\R}{{I\!\!R}}
\newcommand{\N}{{I\!\!N}}

\newcommand{\car}{{\raise2pt\hbox{$\chi$}}}

\newcommand{\dob}{{\mathcal D}^{1,2}}

\renewcommand{\a }{\alpha }

\renewcommand{\t }{\tau }

\newtheorem{Theorem}{Theorem}[section]
\newtheorem{Corollary}[Theorem]{Corollary}

\newtheorem{Lemma}[Theorem]{Lemma}

\setcounter{equation}{-1}

\newcommand{\B}{\mathcal{B}}
\newcommand{\C}{\mathcal{C}}
 \newcommand{\h}{\mathcal{H}}

\newcommand{\F}{\mathcal{F}}
 \newcommand{\W}{\mathcal{W}}

\begin{document}
\title[A decomposition result for a singular elliptic  Equation.]{A decomposition result for a singular elliptic equation on
compact Riemannian manifolds.}
\author[Y. Maliki, F.Z. Terki]{Y. Maliki$^*$ and F.Z. Terki}
\address{Y. Maliki, F.Z. Terki \hfill \break\indent D\'epartement de
Math\'ematiques, Universit\'e Abou Bakr Belkaïd, Tlemcen, \hfill\break%
\indent Tlemcen 13000, Algeria.} \email{\texttt{malyouc@yahoo.fr,
fatimazohra113@yahoo.fr}}

\date{}
\maketitle

\begin{abstract} On compact Riemannian manifolds,
we prove a decomposition theorem for arbitrarily bounded energy
sequence of solutions of a singular elliptic equation.
\end{abstract}

\section{Introduction}

Let $(M,g$) be an $(n\geq 3)-$dimensional Riemannian manifold. In
this paper, we are interested in studying on $(M,g)$ the asymptotic
behaviour of a sequence of solutions $u_{\alpha }$, when
$\alpha\to\infty$, of the following singular elliptic equation:
\begin{equation} \label{eq1.1}
\Delta _{g}u-\frac{h_{\alpha }}{\rho_p ^{2}(x)}u=f(x)|u|^{2^*-2}u,
\tag{$E_{\alpha }$}
\end{equation}
where $2^*=\frac{2n}{n-2}$, $h_{\alpha }$ and $f$ are functions on
$M$, $p$ is a fixed point of $M$ and
$\rho_p(x)=dist_g(p,x)$ is the distance function on $ M$ based at $p$ ( see definition \eqref{eq2.0}).\\
Certainly, if the singular term  $\frac{h_{\alpha }}{\rho_p ^{2}(x)}
$ is replaced by $\frac{n-2}{4(n-1)}Scal_g $, then equation
$E_{\alpha }$ becomes the famous prescribed scalar curvature
equation which is very known in the literature. When $f$ is constant
and the function $\rho_p$ is of power $0<\gamma<2$, equation
\eqref{eq1.1} can be seen as a case of
 equations that arise in the study of conformal deformation to constant scalar
curvature of metrics which are smooth only in some ball
$B_p(\delta)$ ( see \cite{Madani}).\\
 Equations of type
\eqref{eq1.1} have  been the subject of interest especially on the
Euclidean space $\R^n$. A famous result has been obtained in
\cite{Terracini} and it consists of the classification of positive
solutions of the equation
\begin{equation}\label{eq0.1}
\Delta u -\frac{\lambda}{|x|^2}=u^{\frac{n+2}{n-2}}, \tag{$E$}
\end{equation}
where $ 0<\lambda<\frac{(n-4)^2}{4}$, into the family of functions
\begin{equation*}
u_{\lambda }(x)=C_{\lambda }\left(\frac{\left\vert x\right\vert ^{a-1}}{%
1+\left\vert x\right\vert ^{2a}}\right)^{\frac{n}{2}-1}.
\end{equation*}
 where $c_\lambda$ is some constant and $a=\sqrt{1-\frac{4\lambda}{(n-2)^{2}}}.$\\
In terms  of decomposition of Palais-Smale sequences of functional
energy, this family of solutions was employed in \cite{D. Smet} in
constructing singularity bubbles,
\begin{equation*}
\B_\lambda^{\varepsilon_\alpha,y_\alpha}=
\varepsilon_\alpha^{\frac{2-n}{2}}u_\lambda(\frac{x-y_\alpha}{\varepsilon_\alpha}),\text{
with } \frac{|y_\alpha|}{\varepsilon_\alpha}\to0,
\end{equation*}
which, together with the  classical bubbles caused by the existence
of critical exponent
$$ \B_0^{\varepsilon_\alpha,y_\alpha}=
\varepsilon_\alpha^{\frac{2-n}{2}}u_0(\frac{x-y_\alpha}{\varepsilon_\alpha}),\text{
with } \frac{|y_\alpha|}{\varepsilon_\alpha}\to\infty,$$ where $u_0$
being the solution of the non perturbed equation $\Delta
u=u^{\frac{n+2}{n-2}}$  give a whole  picture of the decomposition
of the Palaise-Smale sequences. This decomposition result has been
proved in \cite{D. Smet} and was the key component for the obtention
of interesting existence results for equation \eqref{eq0.1} with a
function $K$ get involved in the nonlinear term. Similar
decomposition result has been obtained in \cite{D.Cao} for equation
\eqref{eq0.1} with small perturbation, the authors described
asymptotically the associated Palais-Smale sequences of bounded energy.\\
 The compactness result obtained
in this paper  can be seen as an extension to Riemannian context of
those obtained in \cite{D. Smet} and \cite{D.Cao} in the Euclidean
context, the difficulties when working in the Riemannian setting
reside mainly in the construction of bubbles.\\
Historically, a famous compactness result for elliptic  value
problems on domains of $\mathbb{R}^n$ has been obtained by M.Struwe
in \cite{Struwe}. Struwe's result has been extended later by O.Druet
et al. in \cite{Druet-hebbey-robert} to elliptic equations on
Riemannian manifolds like
\begin{equation*}
\Delta _{g}u+h_{\alpha }u=u^{2^*-1}.
\end{equation*}
Many results have been obtained by the authors describing the
asymptotic behaviour of Palais-Smale sequences. The authors gave a
detailed construction of bubbles by means of a re-scaling process
via the exponential map at some points, supposed to be the centers
of bubbles. The author in \cite{M.Dellinger} followed the same
procedure to prove a decomposition result on compact Riemannian
manifolds for a Sobolev-Poincaré equation.\\
For our case, we will use, when necessary, ideas from
\cite{Druet-hebbey-robert} to prove a decomposition theorem for
equation \eqref{eq1.1}. More explicitly, after determining
conditions under which solutions of \eqref{eq1.1} exist, we prove as
in \cite{D. Smet} and \cite{D.Cao} that, under some conditions on
the sequence $h_\alpha$ and the function $f$,  a sequence of
solutions of \eqref{eq1.1} of arbitrarily  bounded energy decomposes
into the sum of a solution of the the limiting equation
\begin{equation}\label{0.1}\tag{$E_{\infty }$}
\Delta _{g}u-\frac{h_{\infty }(p)}{\rho_p
^{2}(x)}u=f(p)|u|^{2^*-2}u,
\end{equation}
where $h_\infty$ is the uniform limit of $h_\alpha$,  and two kinds
of bubbles, namely  the classical and the singular ones due to the
presence respectively of the critical exponent and the singular
term.
\section{Notations and preliminaries }
In this section, we introduce some notations and materials necessary
in our study. Let $H_{1}^{2}(M)$ be the Sobolev space consisting of
the completion of $\C^{\infty}(M)$ with respect to the norm
$$||u||_{H_{1}^{2}(M)}=\int_M(|\nabla u|^2+u^2)dv_g.$$
$M$ being compact, $H_{1}^{2}(M)$ is then  embedded in $L_q(M)$
compactly for $q<2^*=\frac{2n}{n-2}$ and continuously for $q=2^*$.\\
Let $K(n,2)$ denote the best constant in Sobolev inequality that
asserts that there exists a constant $B>0$ such that for any $u\in
H_{1}^{2}(M)$,
\begin{equation}\label{1.1}
 ||u||^2_{L_{2^*}(M)}\le K^2(n,2)||\nabla u||^2_{L_2(M)}+B||u||^2_{L_2(M)}.
\end{equation}
Throughout the paper, we will denote by $B(a,r)$ a ball of center
$a$ and radius $r>0$, the point $a$ will be specified either in $M$
or in $\R^n$, and $B(r)$ is a ball in $\R^n$ of center $0$ and
radius $r>0$.\\
Denote by $\delta_g$ the injectivity radius of $M$. Let $p\in M$ be
a fixed point, as in \cite{Madani} we define the function $\rho_p$
on $M$ by
\begin{equation}\label{eq2.0}
    \rho_p(x)=\left\{
       \begin{array}{ll}
         dist_g(p,x), & dist_g(p,x)< \delta_g,\\
         \delta_g, & dist_g(p,x)\ge\delta_g
       \end{array}
     \right.
\end{equation}
For $q\ge1$, we denote by $L_q(M,\rho_p^{\theta})$ the space of
functions $u$ such that $\frac{u}{\rho_p^{\theta}}$ is integrable.
This space is endowed with norm
$\|u\|^q_{q,\rho_p^{\theta}}=\int_M\frac{|u|^q}{\rho_p^{\theta}}dv_g$.\\
In \cite{Madani}, the following Hardy inequality has been proven on
any compact manifold $M$, for every $\varepsilon>0$ there exists a
positive constant $A(\varepsilon)$ such that for any $u\in
H^2_1(M)$,
\begin{equation}\label{2.1}
\int_M \frac{u^2}{\rho^2_p}dv_g\le(
K^2(n,2,-2)+\varepsilon)\int_M|\nabla
u|^2dv_g+A(\varepsilon)\int_Mu^2dv_g,
\end{equation}
 with $K(n,2,-2)$ being the
best constant in the Euclidean Hardy inequality
\begin{equation*}
\int_{\mathbb{R}^n} \frac{u^2}{|x|^2}dx \le
K(n,2,-2)^2\int_{\mathbb{R}^n}|\nabla u|^2dx,
u\in\C^\infty_o(\mathbb{R}^n).
\end{equation*}
 If $u$ is supported in a ball $B(p,\delta),0<\delta<\delta_g$,  then
\begin{equation*}
\int_{B(p,\delta)} \frac{u^2}{\rho^2_p}dv_g\le
K_\delta(n,2,-2)\int_{B(p,\delta)}|\nabla u|^2dv_g,
\end{equation*}
with $ K_\delta(n,2,-2)$ goes to $K(n,2,-2)$ when $\delta$ goes to
$0$. \\
Concerning the existence of solutions of equations \eqref{eq1.1},
the author in \cite{Madani} proved through the classical variational
techniques an existence result with $f$  a constant function. By
following the same procedure, though the presence of the non
constant function $f$ adds further technical difficulties, we can
prove the existence of a non trivial  weak solution of
\eqref{eq1.1}. This existence result is formulated in the following
theorem and due to the very
familiarity of the techniques used, we omit the proof.\\
 For $u\in H^2_1(M)$, set
\begin{equation*}
\mu=\inf_{u\in H^2_1(M),u\neq0}\frac{\int_M(|\nabla
u|^2-\frac{h}{\rho_p^2}u^2)dv_g}{(\int_Mf|u|^{2^*}dv_g)^{\frac{2}{2^*}}}.
\end{equation*}
The following theorem ensures conditions under which a weak solution
$u_\alpha$ of \eqref{eq1.1} exists.
\begin{Theorem}
\label{exist0} Let $(M,g)$ be a compact $n(n\geq 3)-$dimensional
Riemannian manifold and $f,h_\alpha(\alpha\in[0,\infty[)$ be
continuous functions on $M$. Under the following conditions :
\begin{enumerate}
\item $0<h_\alpha(p)<\frac{1}{K^{2}(n,2,-2)}$
\item $f(x)>0 ,\forall x\in M$ and $\mu<\frac{1-h_\alpha(p)K^{2}(n,2,-2)}{(\sup_Mf)^{\frac{n-2}{n }}K^{2}(n,2)},$
\end{enumerate}
equation \eqref{eq1.1} admits a nontrivial weak solution
$u_\alpha\in H^2_1(M)$.
\end{Theorem}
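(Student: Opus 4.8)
The plan is to realise a nontrivial solution of \eqref{eq1.1} as a suitably rescaled minimiser of the constrained variational problem attached to $\mu$. Concretely, I would minimise the quadratic energy
$$Q(u)=\int_M\Big(|\nabla u|^2-\frac{h_\alpha}{\rho_p^2}u^2\Big)dv_g$$
over the set $\{u\in H^2_1(M):\int_M f|u|^{2^*}dv_g=1\}$, so that $\mu=\inf Q$ there. Any minimiser $u$ satisfies, by the Lagrange multiplier rule, $\Delta_g u-\frac{h_\alpha}{\rho_p^2}u=\mu f|u|^{2^*-2}u$, and once $\mu>0$ the rescaling $u_\alpha=\mu^{(n-2)/4}u$ absorbs the multiplier and produces \eqref{eq1.1}; nontriviality is automatic since $\int_M f|u|^{2^*}=1$. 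The genuine difficulty, as always at this critical exponent, is that $H^2_1(M)\hookrightarrow L^{2^*}(M)$ is only continuous, so a minimising sequence may lose compactness by concentrating, and the singular Hardy weight $\rho_p^{-2}$ adds a second potential concentration point at $p$.

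First I would fix a minimising sequence $(u_m)$ with $\int_M f|u_m|^{2^*}=1$ and $Q(u_m)\to\mu$. Using the global Hardy inequality \eqref{2.1} together with condition (1) — which makes $1-h_\alpha(p)K^2(n,2,-2)$ strictly positive — one controls the singular term by a fraction of $\int_M|\nabla u_m|^2$ plus a lower-order $L^2$ term, giving coercivity of $Q$ modulo $L^2$; since the compact embedding $H^2_1\hookrightarrow L^2$ bounds the $L^2$ norm on the constraint, $(u_m)$ is bounded in $H^2_1$. The same computation is where one also verifies $\mu>0$. Passing to a subsequence, $u_m\weakly u$ in $H^2_1$, $u_m\to u$ in $L^2$ and a.e.

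The hard part is to show that no mass escapes, and this is exactly where the strict inequality in condition (2) enters. Writing $v_m=u_m-u\weakly0$, I would split each term: the gradient energy and the singular energy are additive up to $o(1)$ (for the singular term the cross term $\int_M\frac{h_\alpha}{\rho_p^2}uv_m$ tends to $0$, because $w\mapsto\int_M\frac{h_\alpha}{\rho_p^2}uw$ is a bounded, hence weakly continuous, functional on $H^2_1$ by \eqref{2.1}), while the Brezis--Lieb lemma gives $\int_M f|u_m|^{2^*}=\int_M f|u|^{2^*}+\int_M f|v_m|^{2^*}+o(1)$. Localising $v_m$ near $p$ with a cut-off and applying the local Hardy inequality (whose constant tends to $K^2(n,2,-2)$) bounds $\int_M\frac{h_\alpha}{\rho_p^2}v_m^2$ by $(h_\alpha(p)+o(1))K^2(n,2,-2)\int_M|\nabla v_m|^2+o(1)$, while the Sobolev inequality \eqref{1.1}, the $L^2$ term vanishing, yields $(\int_M f|v_m|^{2^*})^{2/2^*}\le(\sup_Mf)^{2/2^*}K^2(n,2)\int_M|\nabla v_m|^2+o(1)$. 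Setting $s=\int_M f|u|^{2^*}$ and $t=\lim\int_M f|v_m|^{2^*}$, so that $s+t=1$, and using $Q(u)\ge\mu s^{2/2^*}$ by definition of $\mu$ together with $2/2^*=(n-2)/n$, these estimates combine to
$$\mu\ \ge\ \mu\,s^{2/2^*}+\Theta\,t^{2/2^*},\qquad \Theta:=\frac{1-h_\alpha(p)K^2(n,2,-2)}{(\sup_Mf)^{2/2^*}K^2(n,2)}.$$
Since $x^{2/2^*}\ge x$ on $[0,1]$ and condition (2) says precisely $\mu<\Theta$, the assumption $t>0$ gives $\mu\ge\mu s^{2/2^*}+\Theta t^{2/2^*}\ge\mu s+\Theta t>\mu(s+t)=\mu$, a contradiction. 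Hence $t=0$, so $s=1$: the weak limit already satisfies the constraint, $v_m\to0$ strongly, and $u$ is the desired minimiser with $u\neq0$.

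Finally I would record the Euler--Lagrange equation for $u$, rescale by $\mu^{(n-2)/4}>0$, and conclude that $u_\alpha\in H^2_1(M)$ is a nontrivial weak solution of \eqref{eq1.1}. The only step demanding real care is the concentration analysis above; everything else is the standard direct method, which is why the precise balance in conditions (1)–(2) — coercivity and positivity of $\mu$ from (1), and the sub-threshold energy $\mu<\Theta$ from (2) — is the heart of the statement.
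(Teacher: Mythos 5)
Your route --- realising the solution as a constrained minimiser of $Q$ on $\{u\in H^2_1(M):\int_M f|u|^{2^*}dv_g=1\}$, getting boundedness of minimising sequences from the Hardy inequality \eqref{2.1} plus condition (1), and excluding loss of compactness via a Brezis--Lieb splitting combined with the strict inequality in condition (2) --- is exactly the ``classical variational techniques'' that the paper invokes and deliberately omits (following Madani), and your attainment argument is essentially sound: the cross-term treatment of the singular energy, the localised Hardy/Sobolev estimates on $v_m$, and the threshold comparison are all correct and are the same estimates the paper itself uses later in Lemmas 3.2 and 3.3.

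There is, however, one genuine gap: the positivity of $\mu$. You assert that ``the same computation is where one also verifies $\mu>0$'', but the coercivity computation only gives $Q(u)\ge c_0\int_M|\nabla u|^2dv_g-C\int_Mu^2dv_g$ with $c_0>0$, which on the constraint set yields $\mu>-\infty$, not $\mu>0$. In fact $\mu>0$ does not follow from hypotheses (1)--(2) at all: condition (1) constrains $h_\alpha$ only at the point $p$, so one may take $h_\alpha(p)\in\left(0,K^{-2}(n,2,-2)\right)$ while $h_\alpha$ is very large on a region far from $p$ (where $\rho_p\equiv\delta_g$ by \eqref{eq2.0}); then $Q$ is negative on bumps supported in that region, so $\mu<0$, while condition (2) holds automatically since its right-hand side is positive. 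This hurts you in two places. First, the chain $\mu s^{2/2^*}+\Theta t^{2/2^*}\ge\mu s+\Theta t$ uses $\mu\ge0$; for $\mu<0$ that inequality reverses (this part is repairable: use $s^{2/2^*}\le1$ to get $\mu\ge\mu+\Theta t^{2/2^*}$ and hence $t=0$). Second, and fatally, the final rescaling $u_\alpha=\mu^{(n-2)/4}u$ requires $\mu>0$: the minimiser solves $\Delta_g u-\frac{h_\alpha}{\rho_p^2}u=\mu f|u|^{2^*-2}u$, and since any rescaling $u\mapsto tu$ multiplies the nonlinear coefficient by the positive factor $|t|^{2-2^*}$, no rescaling can turn $\mu f$ into $f$ when $\mu\le0$: for $\mu<0$ you obtain a solution of the equation with $-f$ in place of $f$, and for $\mu=0$ only of the linear equation. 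So your argument proves the theorem only in the regime $\mu>0$; covering the statement as written would require either proving $\mu>0$ (impossible from (1)--(2) alone, by the example above) or handling $\mu\le0$ by a genuinely different method (the quadratic form is then indefinite, the Nehari/minimisation structure degenerates, and one is in linking territory). In fairness, this defect is arguably inherited from the theorem statement itself, which tacitly presupposes the minimisation picture; but as a proof of the statement as given, the step is missing, and you should at least have flagged it rather than claimed it.
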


\section{Decomposition theorem}
Let $J_\alpha$ be the functional defined on $H_1^2(M)$ by
$$J_\alpha(u)=\frac{1}{2}\int_M(|\nabla
u|^2-\frac{h_\alpha}{\rho^2}
u^2)dv_g-\frac{1}{2^*}\int_Mf|u|^{2^*}dv_g.$$ Traditionally, we
define a Palais-Smale sequence $v_\alpha$ of $J_\alpha $ at a level
$\beta$ as to be the sequence that satisfies $J_\alpha(v_\alpha)\to
\beta $ and $DJ_\alpha(v_\alpha)\varphi\to 0,\forall\varphi \in H^2_1(M)$.\\
Define the following limiting functionals
\begin{eqnarray*}
  J_{\infty}(u) &=&  \frac{1}{2}(\int_M(|\nabla
u|^2-\frac{h_\infty}{\rho^2}
u^2)dv_g-\frac{1}{2^*}\int_Mf|u|^{2^*}dv_g,  u\in H^2_1(M)\\
G(u)&=& \frac{1}{2}\int_{\R^n}|\nabla
u|^2dx-\frac{1}{2^*}\int_{\R^n}|u|^{2^*}dx,u\in D^{1,2}(\R^n), \text{ and } \\
 G_\infty (u)&=& \frac{1}{2}\int_{\R^n}|\nabla
u|^2dx- \frac{h_\infty(p)}{2}\int_M\frac{u^2}{|x|^2}
dx-\frac{f(p)}{2^*}\int_{\R^n}|u|^{2^*}dx,u\in D^{1,2}(\R^n)
\end{eqnarray*}
For $\alpha\in[0,\infty[$, let $h_\alpha$ be a sequence of
continuous functions on $M$ such that
\begin{quote}
\begin{equation*}
(\h)\left\{
  \begin{array}{ll}
   &\text{a- }|h_\alpha(x)|\le C, \text{ for some constant }  C>0, \forall x\in
M \text{ and } \forall\alpha\in[0,\infty[. \\&
   \text{b- There exists a function }  \text{ such that }\sup_M|h_\alpha-h_\infty|\to
0,\\&
  \text{c- } 0<h_\alpha(p)<\frac{1}{K^{2}(n,2,-2)}, \text{ for all } \alpha, 0\le\alpha\le\infty. \\
  \end{array}
\right.
\end{equation*}
\end{quote}
Now, we state our main result
\begin{Theorem} Let $(M,g)$ be a Riemannian manifold with
$dim(M)=n\ge3$, $h_\alpha$ be  a sequence of continuous functions on
$M$ satisfying $(\h)$,  $f$ be  a positive continuous function on
$M$ that satisfies with $h_\alpha$ the conditions of theorem 2.1.
Let $u_\alpha$ be a sequence of weak  solutions of\eqref{eq1.1} such
that $\int_Mf|u_\alpha|^{2^*}dv_g\le C,\forall\alpha>0$. Then, there
exist $m \in \N$, sequences $
R_\alpha^i>0,R_\alpha^i\underset{\alpha\to\infty}{\to}0$, $k\in\N^n$
sequences
$\t_\alpha^j>0,\t_\alpha^j\underset{\alpha\to\infty}{\to}0$,
converging sequences  $x_\alpha^j\to x_o^j\neq p$ in $M$, a solution
$u_o\in H^2_1(M)$ of \eqref{0.1},  solutions $v_i\in D^{1,2}(\R^n)$
 of \eqref{4.15} and nontrivial solutions $\nu_j\in
D^{1,2}(\R^n)$ of \eqref{eq3.8} such that up to a subsequence
\begin{eqnarray*}
u_\alpha&=&u_o+\sum_{i=1}^{k}(R^i_\alpha)^{\frac{2-n}{n}}
\eta_\delta(\exp^{-1}_p(x))v_i((R_\alpha^i)^{-1}\exp^{-1}_p(x))\\&+&\sum_{j=1}^{l}(r^i_\alpha)^{\frac{2-n}{n}}
f(x_o)^{\frac{2-n}{4}}\eta_\delta(\exp^{-1}_{x_\alpha^j}(x))\nu_j((r_\alpha^j)^{-1}\exp^{-1}_{x_\alpha^j}(x))+\W_\alpha,\\&&
\text{ with } \W_\alpha\to 0 \text{ in }H^1_2(M),
\end{eqnarray*}
and
\begin{equation*}
J_\alpha(u_\alpha)=J_{\infty}(u_o)+\sum_{i=1}^k
G_\infty(v_i)+\sum_{j=1}^lf(x_o^j)^{\frac{2-n}{2}} G(\nu_j)+o(1).
\end{equation*}
\end{Theorem}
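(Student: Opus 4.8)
The plan is to run a Struwe-type bubbling analysis adapted to the inverse-square potential, following the strategy of \cite{Struwe} and \cite{Druet-hebbey-robert} with the singular refinements of \cite{D. Smet} and \cite{D.Cao}. First I would establish that $u_\alpha$ is bounded in $H_1^2(M)$. Testing \eqref{eq1.1} against $u_\alpha$ gives
$$\int_M|\nabla u_\alpha|^2dv_g-\int_M\frac{h_\alpha}{\rho_p^2}u_\alpha^2dv_g=\int_Mf|u_\alpha|^{2^*}dv_g\le C.$$
The Hardy inequality \eqref{2.1}, together with condition (c) of $(\h)$, namely $h_\alpha(p)K^2(n,2,-2)<1$, renders the left-hand quadratic form coercive modulo an $L_2$ term; the compact embedding $H_1^2(M)\hookrightarrow L_2(M)$ absorbs that term and yields a uniform bound. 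Extracting a weak limit $u_\alpha\weakly u_o$ and passing to the limit in the weak formulation, using $\sup_M|h_\alpha-h_\infty|\to0$, shows that $u_o$ solves the limiting equation \eqref{0.1}.

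The heart of the argument is the analysis of the defect of strong convergence of $w_\alpha:=u_\alpha-u_o$, which is an approximate Palais-Smale sequence for $J_\infty$ at a positive level whenever it does not converge strongly. Concentration then occurs, and I would extract one profile at a time by a standard selection of a concentration point $x_\alpha$ and scale $\mu_\alpha\to0$, rescaling through the exponential map: in geodesic normal coordinates $g=\delta+O(|y|^2)$, so $\tilde w_\alpha(y):=\mu_\alpha^{(n-2)/2}w_\alpha(\exp_{x_\alpha}(\mu_\alpha y))$ satisfies an equation whose Laplace-Beltrami operator converges to the Euclidean one on compact sets, curvature entering only as lower-order terms. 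Two regimes arise according to the location of the center. If $x_\alpha\to x_o\neq p$, then $\rho_p(x_\alpha)\to\rho_p(x_o)>0$, so the singular term $\tfrac{h_\alpha}{\rho_p^2}$ stays bounded near $x_\alpha$ and contributes $O(\mu_\alpha^2)\to0$ after rescaling; the limit profile solves the pure critical equation $\Delta\nu=f(x_o)|\nu|^{2^*-2}\nu$, and absorbing the constant $f(x_o)$ produces the normalized Euler-Lagrange equation of $G$ together with the factor $f(x_o)^{(2-n)/4}$. If instead the center is exactly $p$, then $\rho_p(\exp_p(R_\alpha y))\approx R_\alpha|y|$, so $\tfrac{h_\alpha}{\rho_p^2}$ rescales to $\tfrac{h_\infty(p)}{|y|^2}$ and \emph{survives}, giving the Hardy-Sobolev limit $\Delta v-\tfrac{h_\infty(p)}{|x|^2}v=f(p)|v|^{2^*-2}v$, the Euler-Lagrange equation of $G_\infty$.

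To close the induction I would use energy quantization: each nontrivial limit profile carries energy bounded below by the ground-state level of the corresponding functional ($G$ or $G_\infty$), a fixed positive constant. Subtracting the rescaled profile, cut off by $\eta_\delta$ and transplanted back via $\exp_{x_\alpha}$, produces a new sequence of the same type whose energy has dropped by at least this quantum. Since $J_\alpha(u_\alpha)$ is bounded (guaranteed by $\int_Mf|u_\alpha|^{2^*}dv_g\le C$) and each extraction removes a fixed amount, the procedure terminates after finitely many steps, yielding $k$ singular bubbles at $p$, $l$ classical bubbles at the points $x_o^j$, and a remainder $\W_\alpha\to0$ strongly in $H_1^2(M)$. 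The energy identity then follows from the asymptotic orthogonality of the profiles, under which all cross terms vanish.

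The main obstacle will be precisely these orthogonality and interaction estimates, combined with the correct handling of the singular potential under rescaling. One must verify that bubbles at distinct centers and scales are asymptotically orthogonal in $H_1^2(M)$ and in energy, that only the bubbles centered at $p$ retain the Hardy term while every other concentration reduces to the standard critical problem, and that the cutoff truncation errors together with the curvature corrections from the exponential map are negligible in the $H_1^2$ norm. Carrying out these estimates rigorously in the Riemannian setting with the inverse-square singularity—where the bubble construction, as stressed in the introduction, is the delicate point—is the technical crux of the proof.
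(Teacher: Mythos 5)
Your proposal follows essentially the same route as the paper's own proof: coercivity of the quadratic form via the Hardy inequality \eqref{2.1} and condition (c) of $(\h)$ for the $H_1^2(M)$ bound, subtraction of the weak limit solving \eqref{0.1}, rescaling through the exponential map to produce the two bubble types (singular profiles solving \eqref{4.14} when concentration sits at $p$, classical profiles solving \eqref{eq3.8} when the centers stay away from $p$), and termination of the induction by energy quantization at the threshold $\beta^*$. The only difference is organizational: the paper fixes the order of extraction, always rescaling at $p$ first (Lemmas 3.4 and 3.5) and passing to centers $x_\alpha\to x_o\neq p$ (Lemma 3.6) only when the profile at $p$ vanishes, which is the same dichotomy you describe, merely sequenced so that the intermediate case of centers approaching $p$ is ruled out by contradiction rather than by a separate interaction estimate.
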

In order to prove this theorem, we prove some useful lemmas. In all
what follows, $h_\alpha$ is supposed to satisfy conditions $(\h)$.
\begin{Lemma} Let $u_\alpha$ be a Palais-Smale sequence for
$J_\alpha$ at level $\beta$  that converges to a function $u$ weakly
in $H^2_1(M)$ and $L_2(M,\rho_p^2)$, strongly in $L_q(M), 1\le
q<2^*$ and almost everywhere to a function $u$ . Then, the sequence
$v_\alpha=u_\alpha-u$ is sequence of Palais-Smale for $J_\alpha$ and
$$J_\alpha(v_\alpha)=\beta-J_\infty(u)+o(1).$$
\end{Lemma}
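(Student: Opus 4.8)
The plan is to combine the bilinearity of the quadratic part of $J_\alpha$ with two Brezis--Lieb type lemmas for the critical term, while routing every singular contribution through the Hardy inequality \eqref{2.1} and the hypotheses $(\h)$. Throughout set $v_\alpha = u_\alpha - u$, so that $v_\alpha \weakly 0$ in $H^2_1(M)$ and in $L_2(M,\rho_p^2)$, strongly in $L_q(M)$ for $q<2^*$, and a.e.

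For the energy identity I would expand each term of $J_\alpha(u_\alpha)$ about $u$. The gradient cross term $\int_M \nabla v_\alpha\cdot\nabla u\,dv_g$ tends to $0$ by weak $L_2$ convergence of $\nabla v_\alpha$, and the singular cross term, rewritten as the $L_2$ pairing of $v_\alpha/\rho_p \weakly 0$ against $h_\alpha u/\rho_p$, tends to $0$ because the latter converges strongly in $L_2$ (the factor $h_\alpha$ is bounded by $(\h)$a and $u/\rho_p\in L_2$). Replacing $h_\alpha$ by $h_\infty$ in the remaining $u^2$ term costs only $\sup_M|h_\alpha-h_\infty|\int_M u^2/\rho_p^2\,dv_g = o(1)$ by $(\h)$b. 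Together with the Brezis--Lieb lemma $\int_M f|u_\alpha|^{2^*}dv_g = \int_M f|v_\alpha|^{2^*}dv_g + \int_M f|u|^{2^*}dv_g + o(1)$, valid since $u_\alpha\to u$ a.e. with $\int_M f|u_\alpha|^{2^*}dv_g$ bounded and $f$ continuous on the compact $M$, this gives $J_\alpha(u_\alpha) = J_\alpha(v_\alpha) + J_\infty(u) + o(1)$, whence $J_\alpha(v_\alpha) = \beta - J_\infty(u) + o(1)$.

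For the Palais--Smale property I would show $DJ_\alpha(v_\alpha)\to 0$ in $(H^2_1(M))'$. The two quadratic terms of $DJ_\alpha(v_\alpha)\var$ split exactly and linearly into the corresponding terms for $u_\alpha$ minus those for $u$, for every $\var$. For the critical term I would invoke the second Brezis--Lieb lemma, $f|u_\alpha|^{2^*-2}u_\alpha - f|v_\alpha|^{2^*-2}v_\alpha \to f|u|^{2^*-2}u$ strongly in $L^{(2^*)'}(M)$ (again from a.e. convergence and $L_{2^*}$ boundedness); pairing with $\var$ and using H\"older gives a remainder bounded by $C\|\var\|_{H^2_1(M)}$ times an $\alpha$-dependent factor $o(1)$, hence uniform over the unit ball. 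Collecting terms yields, uniformly in $\var$, the identity $DJ_\alpha(v_\alpha)\var = DJ_\alpha(u_\alpha)\var - DJ_\infty(u)\var + \int_M \frac{h_\alpha-h_\infty}{\rho_p^2}u\var\,dv_g + o(1)$. The first summand is $o(1)$ uniformly by the Palais--Smale hypothesis, and the integral is controlled by $\sup_M|h_\alpha-h_\infty|\,(\int_M u^2/\rho_p^2\,dv_g)^{1/2}(\int_M \var^2/\rho_p^2\,dv_g)^{1/2}$, which is $o(1)$ uniformly once \eqref{2.1} bounds $\int_M \var^2/\rho_p^2\,dv_g$ by $C\|\var\|^2_{H^2_1(M)}$.

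It remains to identify $u$. Passing to the limit in $DJ_\alpha(u_\alpha)\var = o(1)$ --- weak $H^2_1$ convergence for the gradient, the same $L_2(M,\rho_p^2)$ pairing (now with $h_\alpha\to h_\infty$) for the singular term, and weak $L^{(2^*)'}$ convergence of $|u_\alpha|^{2^*-2}u_\alpha$ for the critical term --- shows $DJ_\infty(u)\var = 0$ for all $\var$, i.e. $u$ is a weak solution of \eqref{0.1}. Hence $DJ_\alpha(v_\alpha)\to 0$ and $v_\alpha$ is a Palais--Smale sequence for $J_\alpha$ at level $\beta - J_\infty(u)$. I expect the main obstacle to be the singular weight $\rho_p^{-2}$: every cross term and, crucially, the uniformity in $\var$ of the derivative estimate depend on controlling $\int_M \var^2/\rho_p^2\,dv_g$ by the Hardy inequality and on the uniform convergence $h_\alpha\to h_\infty$; without these the Palais--Smale conclusion would not survive passage to the difference $v_\alpha$.
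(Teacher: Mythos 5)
Your proposal is correct and follows essentially the same route as the paper: expand $J_\alpha$ and $DJ_\alpha$ about $u$, kill the cross terms using the assumed weak convergences (with the singular ones routed through the Hardy inequality \eqref{2.1} and the uniform convergence $h_\alpha\to h_\infty$ from $(\h)$), handle the critical terms by Brezis--Lieb, and identify $u$ as a weak solution of \eqref{0.1} by passing to the limit in $DJ_\alpha(u_\alpha)\varphi$. The only cosmetic difference is that you invoke the standard Brezis--Lieb variant giving strong $L^{2^*/(2^*-1)}$ convergence of $f|u_\alpha|^{2^*-2}u_\alpha-f|v_\alpha|^{2^*-2}v_\alpha$, whereas the paper derives the same remainder estimate from the pointwise inequality $\bigl||v_\alpha+u|^{2^*-2}(v_\alpha+u)-|v_\alpha|^{2^*-2}v_\alpha-|u|^{2^*-2}u\bigr|\le C(|v_\alpha|^{2^*-2}|u|+|u|^{2^*-2}|v_\alpha|)$ followed by H\"older; these are interchangeable.
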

\begin{proof}
First, in view of the fact that $u_\alpha$ is a Palais-Smale
sequence for $J_\alpha$, $u_\alpha$  is bounded in $H^2_1(M)$. In
fact, $DJ_\alpha(u_\alpha)u_\alpha=o(||u||_{H^2_1(M)})$ implies that
\begin{equation*}
J_\alpha(u_\alpha)=\frac{1}{n}\int_Mf|u_\alpha|^{2^*}dv_g=\beta+o(1)+o(||u||_{H^2_1(M)}).
\end{equation*}
 Since $f>0$, this implies in turn that $u_\alpha$ is bounded in
$L_{2^*}(M)$ and then in $L_2(M)$. Furthermore, we have
\begin{equation*}
\int_M|\nabla u_\alpha|^2dv_g=
nJ_\alpha(u_\alpha)+\int\frac{h_\alpha}{\rho_p^2}u_\alpha^2dv_g+o(||u||_{H^2_1(M)})
\end{equation*}
By continuity of $h_\alpha$ on $p$, we have that for all
$\epsilon>0$ there exists $\delta>0$ such that
\begin{eqnarray*}
&\int_M|\nabla u_\alpha|^2dv_g\le
n\beta+(\varepsilon+h_\alpha(p))\int_{B(p,\delta)}\frac{h_\alpha}{\rho_p^2}u_\alpha^2dv_g
&\\&+\delta^{-2}\int_{M\setminus B(p,\delta)}h_\alpha
u_\alpha^2dv_g+o(||u||_{H^2_1(M)})+o(1),&
\end{eqnarray*}
then, by applying Hardy inequality \eqref{2.1} that for every
$\varepsilon>0$ small there exists a constant $A(\varepsilon)$ such
that
\begin{eqnarray*}
&\int_M|\nabla u_\alpha|^2dv_g\le
n\beta+(\varepsilon+h_\alpha(p))(\varepsilon+
K^2(n,2,-2))\int_{M}|\nabla u_\alpha|^2dv_g
&\\&+A(\varepsilon)\int_{M} u_\alpha^2dv_g+o(||u||_{H^2_1(M)})+o(1)&
\end{eqnarray*}
since $0<h_\alpha(p)<\frac{1}{K^2(n,2,-2)}$, we can find
$\varepsilon >0$ small such that
$1-(\varepsilon+h_\alpha(p))(\varepsilon+ K^2(n,2,-2))>0$ which
implies that $\int_M|\nabla u_\alpha|^2dv_g$ is bounded. Thus, $u_\alpha $ bounded in $H^2_1(M)$.\\
Now, for two functions $\varphi,\phi\in H_1^2(M)$, Hölder and Hardy
inequalities give
\begin{equation}\label{eqn3.1}
\int_M|\frac{h_\alpha-h_\infty}{\rho_p^2}\phi\varphi| dv_g
   \le C ||\varphi||_{H_1^2(M)}||\phi||_{H^2_1(M)}\sup_M|h_\alpha-h_\infty|,
\end{equation}
 writing
\begin{equation*}
\int_M\frac{h_\alpha}{\rho_p^2}\phi\varphi
dv_g=\int_M\frac{h_\alpha-h_\infty}{\rho_p^2}\phi\varphi dv_g+
\int_M\frac{h_\infty}{\rho_p^2}\phi\varphi dv_g,
\end{equation*}
we get by the assumption made on the sequence $h_\alpha$ that
\begin{equation}\label{eqn3.2}
\int_M\frac{h_\alpha}{\rho^2_p}\phi\varphi dv_g=
\int_M\frac{h_\infty}{\rho^2_p}\phi\varphi dv_g+o(1).
\end{equation}
Then, since the sequence $u_\alpha$ is bounded in $H^2_1(M)$, by
taking $\phi=u_\alpha$, we get from \eqref{eqn3.1} together with the
weak convergence of $u_\alpha$ to $u$ in $L^2(M,\rho^{-2})$ that
\begin{equation}\label{eq3.3}
\int_M\frac{h_\alpha}{\rho_p^2}u_\alpha\varphi dv_g=
\int_M\frac{h_\infty}{\rho^2_p}u\varphi dv_g+o(1),
\end{equation}
thus, applying the last identity to $\varphi=u$, we get by the weak
convergence in $H_1^2(M)$ that
\begin{equation*}
 J_\alpha(v_\alpha)=J_\alpha(u_\alpha)-J_\infty(u)+\Phi(u_\alpha)+o(1),
 \end{equation*}
 with
 $$\Phi_\alpha(u_\alpha)=\frac{1}{2^*}\int_Mf(|u_\alpha|^{2^*}-|u|^{2^*}-|v_\alpha|^{2^*})dv_g,$$
 which by the Brezis-Leib convergence Lemma equals to $o(1)$, hence we obtain
\begin{equation*}
J_\alpha(v_\alpha)=\beta-J_\infty(u)+o(1).
\end{equation*}
Moreover, for $\varphi\in
 H_1^2(M)$, by taking $\phi=u$ in \eqref{eqn3.2}, we can write
\begin{equation*}
DJ_\alpha(v_\alpha)\varphi=DJ_\alpha(u_\alpha)\varphi-DJ_\infty(u)\varphi+
\Phi(v_\alpha)\varphi+o(1),
\end{equation*}
 with
  \begin{eqnarray*}
\Phi(v_\alpha)\varphi&=&\int_Mf\left(|v_\alpha+u|^{2^*-2}(v_\alpha+u)-
|v_\alpha|^{2^*-2}v_\alpha-|u|^{2^*-2}u\right)\varphi dv_g.
\end{eqnarray*}
Knowing that there exists a positive constant $C$ such that
\begin{equation*}
\mid|v_\alpha+u|^{2^*-2}(v_\alpha+u)-
|v_\alpha|^{2^*-2}v_\alpha-|u|^{2^*-2}u\mid\le
C(|v_\alpha|^{2^*-2}|u|+|u|^{2^*-2}|v_\alpha|),
\end{equation*}
 we get, after applying Hölder inequality, that there exists a  positive constant $C$ such that
\begin{equation*}
|\Phi(v_\alpha)\varphi|\le C
\left(\||v_\alpha|^{2^*-2}|u|\|_{L_{\frac{2^*}{2^*-1}}(M)}+
\||u|^{2^*-2}|v_\alpha|\|_{L_{\frac{2^*}{2^*-1}}(M)}\right)\|\varphi\|_{L_{2^*}(M)},
\end{equation*}
which gives that $\Phi(u_\alpha)=o(1)$ since both
$\frac{2^*(2^*-2)}{2^*-1}$ and $ \frac{2^*}{2^*-1}$ are smaller that
$2^*$ and the inclusion of $H^2_1(M)$ in $L_q(M)$ is compact for $q<2^*$.\\
On the other hand, since the sequence $u_\alpha^{2^*-2}u_\alpha$ is
bounded in $L_{\frac{2^*}{2^*-1}}(M)$ and converges almost
everywhere to $u^{2^*-2}u$ , we get that $u_\alpha^{2^*-2}u_\alpha$
converges weakly in $L_{\frac{2^*}{2^*-1}}(M)$ to $u^{2^*-2}u$.
This, together with the weak convergence in $H^2_1(M)$ of $u_\alpha$
to $u$ and relation \eqref{eq3.3}, imply that $
DJ_\infty(u)\varphi=0,\forall\varphi \in H^2_1(M)$. Hence,
$DJ_\alpha(v_\alpha)\varphi\to 0, \forall\varphi \in H^2_1(M)$.
\end{proof}
\begin{Lemma} Let $v_\alpha$ be a Palais-Smale sequence of $J_{\alpha}$ at level $\beta$ that converges
weakly to $0$ in $H_1^2(M)$. If
$\beta<\beta^*=\frac{\left(1-h_\infty(p)K^2(n,2,-2)\right)^{\frac{n}{2}}}{n(\sup_M
f)^{\frac{n-2}{2}}K(n,2)^n}$, then  $v_\alpha$ converges strongly to
$0$ in $H_1^2(M)$.
\end{Lemma}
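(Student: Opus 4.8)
The plan is to extract from the Palais--Smale condition the two scalar relations that govern the energy, then run a Sobolev--Hardy chain sharp enough to force the limiting energy $n\beta$ either to vanish or to exceed the threshold $n\beta^*$. First I would record, exactly as in the proof of the previous lemma, that $v_\alpha$ is bounded in $H_1^2(M)$, so that $DJ_\alpha(v_\alpha)v_\alpha=o(1)$. Writing $\|v_\alpha\|_\alpha^2:=\int_M|\nabla v_\alpha|^2dv_g-\int_M\frac{h_\alpha}{\rho_p^2}v_\alpha^2dv_g$, this yields $\|v_\alpha\|_\alpha^2=\int_M f|v_\alpha|^{2^*}dv_g+o(1)$, and feeding this into $J_\alpha(v_\alpha)\to\beta$ gives $\int_M f|v_\alpha|^{2^*}dv_g\to n\beta$ together with $\|v_\alpha\|_\alpha^2\to n\beta$. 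Since $v_\alpha\rightharpoonup0$ in $H_1^2(M)$ and the embedding $H_1^2(M)\hookrightarrow L_q(M)$ is compact for $q<2^*$, I also have $\int_M v_\alpha^2dv_g\to0$, which makes every lower-order term disappear.

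The heart of the argument -- and the step I expect to be the main obstacle -- is to control the $\alpha$-dependent singular term $\int_M\frac{h_\alpha}{\rho_p^2}v_\alpha^2dv_g$ by the Dirichlet energy with the sharp constant $h_\infty(p)K^2(n,2,-2)$. I would fix $\epsilon>0$ and split the integral over $B(p,\delta)$ and $M\setminus B(p,\delta)$. On the far region $\rho_p\ge\delta$, boundedness of $h_\alpha$ (condition $(\h)$a) together with $\int_M v_\alpha^2dv_g\to0$ makes the contribution $o(1)$. On $B(p,\delta)$ I write $h_\alpha=(h_\alpha-h_\infty)+h_\infty$: uniform convergence $\sup_M|h_\alpha-h_\infty|\to0$ (condition $(\h)$b) and continuity of $h_\infty$ at $p$ let me choose $\delta$ small so that $h_\alpha\le h_\infty(p)+\epsilon$ there for $\alpha$ large. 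Applying the Hardy inequality \eqref{2.1} and absorbing the $A(\epsilon)\int_M v_\alpha^2dv_g$ term into $o(1)$, I obtain $\int_M\frac{h_\alpha}{\rho_p^2}v_\alpha^2dv_g\le(h_\infty(p)+\epsilon)(K^2(n,2,-2)+\epsilon)\int_M|\nabla v_\alpha|^2dv_g+o(1)$. Since $0<h_\infty(p)<1/K^2(n,2,-2)$ by $(\h)$c, the constant $C_\epsilon:=1-(h_\infty(p)+\epsilon)(K^2(n,2,-2)+\epsilon)$ is positive for $\epsilon$ small, giving the coercivity estimate
\[
\|v_\alpha\|_\alpha^2\ge C_\epsilon\int_M|\nabla v_\alpha|^2dv_g+o(1).
\]

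It then remains to close the loop with the Sobolev inequality \eqref{1.1}. Because $\int_M v_\alpha^2dv_g\to0$, the $L_2$ term in \eqref{1.1} is negligible, so $\|v_\alpha\|_{L_{2^*}}^2\le K^2(n,2)\int_M|\nabla v_\alpha|^2dv_g+o(1)$; bounding $\int_M f|v_\alpha|^{2^*}dv_g\le(\sup_M f)\|v_\alpha\|_{L_{2^*}}^{2^*}$ and raising to the power $2^*/2$ yields $\int_M f|v_\alpha|^{2^*}dv_g\le(\sup_M f)\big(K^2(n,2)\int_M|\nabla v_\alpha|^2dv_g\big)^{2^*/2}+o(1)$. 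Substituting the coercivity estimate in the form $\int_M|\nabla v_\alpha|^2dv_g\le C_\epsilon^{-1}(\int_M f|v_\alpha|^{2^*}dv_g+o(1))$ and letting $\alpha\to\infty$, with $S_\infty:=\lim\int_M f|v_\alpha|^{2^*}dv_g=n\beta$, I get $S_\infty\le(\sup_M f)\,C_\epsilon^{-2^*/2}K(n,2)^{2^*}\,S_\infty^{2^*/2}$. If $S_\infty>0$, dividing through and using $2^*/2-1=2/(n-2)$ and $2^*/2=n/(n-2)$ gives, after sending $\epsilon\to0$, the bound $n\beta=S_\infty\ge\frac{(1-h_\infty(p)K^2(n,2,-2))^{n/2}}{(\sup_M f)^{(n-2)/2}K(n,2)^n}=n\beta^*$, contradicting $\beta<\beta^*$. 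Hence $S_\infty=0$, i.e. $\|v_\alpha\|_\alpha^2\to0$; the coercivity estimate then forces $\int_M|\nabla v_\alpha|^2dv_g\to0$, and combined with $\int_M v_\alpha^2dv_g\to0$ this gives $v_\alpha\to0$ strongly in $H_1^2(M)$, as claimed.
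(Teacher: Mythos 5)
Your proposal is correct and follows essentially the same route as the paper: both extract $\int_M f|v_\alpha|^{2^*}dv_g\to n\beta$ and the vanishing of the $L_2$ terms from the Palais--Smale property, then play the Hardy inequality \eqref{2.1} (giving coercivity with constant $1-(h_\infty(p)+\varepsilon)(K^2(n,2,-2)+\varepsilon)$, which is the paper's \eqref{eqn3.3}) against the Sobolev inequality \eqref{1.1} (the paper's \eqref{eqn3.4}) to conclude that $\beta>0$ would force $\beta\ge\beta^*$, a contradiction, so $\beta=0$ and the coercivity estimate yields strong convergence. The only differences are cosmetic: you phrase the near/far splitting with $h_\infty(p)+\epsilon$ instead of $h_\alpha(p)+\varepsilon$ and organize the algebra around $S_\infty=n\beta$ before letting $\epsilon\to0$, rather than carrying the explicit $\varepsilon$-corrections as the paper does.
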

\begin{proof} If $v_\alpha$ is a Palais-Smale sequence of $J_{\alpha}$ at level
$\beta$ that converges to $0$ weakly in $H_1^2(M)$, then
$\int_Mu_\alpha^2dv_g=o(1)$ and
\begin{equation*}
\beta=\frac{1}{n}\int_M(|\nabla
v_\alpha|^2-\frac{h_\alpha}{\rho^2_p}v_\alpha^2)dv_g=\frac{1}{n}\int_Mf|v_\alpha|^{2^*}dv_g+o(1).
\end{equation*}
This implies that $\beta\ge0$. Hence, on the one hand, by Hardy
inequality \eqref{2.1} we get as in Lemma 3.2, that for small enough
$\varepsilon>0$,
\begin{equation}\label{eqn3.3}
\int_M|\nabla
v_\alpha|^2dv_g\le\frac{n\beta}{1-[(h_\alpha(p)+\varepsilon)(\varepsilon+
K^2(n,2,-2))]}+o(1),
\end{equation}
and on the other hand, by Sobolev inequality \eqref{1.1}, we also
get
\begin{equation}\label{eqn3.4}
\int_M|\nabla v_\alpha|^2dv_g \ge \left(\frac{n\beta}{(\sup_M
f)K^{2^*}(n,2)}\right)^{\frac{2}{2^*}}+o(1).
\end{equation}
Now, suppose that $\beta>0$, then the above inequalities
\eqref{eqn3.3} and \eqref{eqn3.4} , for  $\alpha$ big  enough, give
\begin{equation*}
\beta\ge
\frac{\left(1-(h_\infty(p)+2\varepsilon)(K^2(n,2,-2)+\varepsilon)\right))^{\frac{n}{2}}}{n(\sup_M
f)^{\frac{n-2}{2}}K(n,2)^n},
\end{equation*}
that is
\begin{equation*}
    \beta^{\frac{2}{n}}\ge{\beta^{*}}^{\frac{2}{n}}-\frac{2\varepsilon^2+\varepsilon(h_\infty(p)+2\varepsilon
K^2(n,2,-2))}{n^{\frac{2}{n}}(\sup_M f)^{\frac{n-2}{n}}K(n,2)^2}.
\end{equation*}
By  assumption $\beta^*>\beta$, by taking $\varepsilon>0$ small
enough so that
\begin{equation*}
-2\varepsilon^2-\varepsilon(h_\infty(p)-2\varepsilon
K^2(n,2,-2))+n^{\frac{2}{n}}(\sup_M
f)^{\frac{n-2}{n}}K(n,2)^2({\beta^*}^{\frac{2}{n}}-\beta^{\frac{2}{n}})>0,
\end{equation*}
we get a contradiction. Thus $\beta=0$ and \eqref{eqn3.3} assures
that
\begin{equation*}
\int_M|\nabla v_\alpha|^2dv_g=o(1),
\end{equation*}
that is $v_\alpha\to0$ strongly in $H_1^2(M)$.
\end{proof}
In the following, for a given positive constant $R$, define a
cut-off function $\eta_R\in C_o^{\infty}( \R^n)$ such that
$\eta_R(x)=1, x\in B(R)$ and
$\eta_R(x)=0,x\in \R^n\setminus B(2R)$,$0\le\eta_R\le 1$ and $|\nabla\eta_R|\le\frac{C}{R}$.\\
\begin{Lemma} Let $v_\alpha$ be  Palais-Smale sequence for $J_\alpha$ at level  $\beta$
that weakly, but not strongly, converges to $0$ in $H_1^2(M)$. Then,
there exists a sequence of positive reals $R_\alpha\to 0 $ such
that, up to a subsequence, $ \hat{\eta}_\alpha\hat{v}_\alpha$ with
\begin{equation*}
\hat{v}_\alpha(x)=R_\alpha^{\frac{n-2}{2}}v_\alpha(
\exp_{p}(R_\alpha x)),
\end{equation*}
and $\hat{\eta}_\alpha(x)=\eta_{\delta}(R_\alpha x))$ ($\delta$ is
some positive constant),  converges weakly in $D_1^2(\mathbb{R}^n)$
to a function $v\in D_1^2(\mathbb{R}^n)$ such that, if $v\neq0, v$
is weak solution of the Euclidean equation
\begin{equation}\label{4.14}
\Delta v-\frac{h_\infty(p)}{|x|^2}v=f(p)|v|^{2^*-2}v.
\end{equation}
\end{Lemma}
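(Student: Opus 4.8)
The plan is to extract a \emph{singular bubble} by rescaling $v_\alpha$ around the singular point $p$ at a scale $R_\alpha$ governed by the concentration of energy, and then to pass to the limit in the rescaled weak formulation of \eqref{eq1.1}. Since $v_\alpha\weakly 0$ in $H_1^2(M)$ but not strongly, and $v_\alpha\to 0$ in $L_q(M)$ for every $q<2^*$ by the compact embedding, the sequence can retain its $L_{2^*}$--mass only through concentration; moreover the argument of Lemma 3.3 forces $\beta\ge\beta^*>0$. To locate the scale I would introduce a concentration function centered at $p$, for instance $\mu_\alpha(r)=\int_{B(p,r)}f|v_\alpha|^{2^*}\,dv_g$, fix a small threshold $\lambda>0$, and define $R_\alpha$ by $\mu_\alpha(R_\alpha)=\lambda$. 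The strong $L_q$--convergence for $q<2^*$ then forces $R_\alpha\to 0$.

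With $\hat v_\alpha(x)=R_\alpha^{\frac{n-2}{2}}v_\alpha(\exp_p(R_\alpha x))$ and $\hat\eta_\alpha(x)=\eta_\delta(R_\alpha x)$, I would work in geodesic normal coordinates at $p$, where $g_{ij}(R_\alpha x)=\delta_{ij}+O(R_\alpha^2|x|^2)$ and $dv_g\to dx$ uniformly on compact sets as $R_\alpha\to0$. Because the Dirichlet integral is invariant under this rescaling, $\int_{\mathbb{R}^n}|\nabla(\hat\eta_\alpha\hat v_\alpha)|^2\,dx$ is controlled by $\int_M|\nabla v_\alpha|^2\,dv_g$, which is bounded by the argument of Lemma 3.2. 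Hence $\hat\eta_\alpha\hat v_\alpha$ is bounded in $D^{1,2}(\mathbb{R}^n)$ and, up to a subsequence, converges weakly to some $v\in D^{1,2}(\mathbb{R}^n)$; on any fixed compact set $\hat\eta_\alpha\equiv1$ for $\alpha$ large, so the cutoff is harmless in the limit.

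To identify $v$, I would test the Palais--Smale relation $DJ_\alpha(v_\alpha)\phi=o(1)$ against the rescaled functions $\phi(y)=R_\alpha^{-\frac{n-2}{2}}\psi\big(R_\alpha^{-1}\exp_p^{-1}(y)\big)$, $\psi\in C_o^\infty(\mathbb{R}^n)$, and change variables $y=\exp_p(R_\alpha x)$. Since $\rho_p(\exp_p(R_\alpha x))=R_\alpha|x|$ within the injectivity radius, the singular term rescales exactly, producing the coefficient $h_\alpha(\exp_p(R_\alpha x))\to h_\infty(p)$ by hypothesis $(\h)$, while $f(\exp_p(R_\alpha x))\to f(p)$ by continuity and the Laplace--Beltrami operator converges to the Euclidean Laplacian. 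Passing to the limit using the weak $D^{1,2}$--convergence for the gradient term, the Rellich theorem together with the weak continuity of the critical nonlinearity (boundedness in $L_{2^*}$ plus a.e.\ convergence) for the right-hand side, one obtains that $v$ solves \eqref{4.14} weakly.

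The main obstacle is the passage to the limit in the singular term $\int \frac{h_\alpha(\exp_p(R_\alpha x))}{|x|^2}\hat v_\alpha\,\psi\,dx$ near the origin, where $|x|^{-2}$ fails to be locally integrable. The device to overcome it is the Hardy inequality \eqref{2.1}, which makes $\hat v_\alpha/|x|$ bounded in $L^2$; combined with the a.e.\ convergence $\hat v_\alpha\to v$ away from $0$, this upgrades to weak convergence $\hat v_\alpha/|x|\weakly v/|x|$ in $L^2$, so that the singular integral converges to $\int\frac{h_\infty(p)}{|x|^2}v\,\psi\,dx$. A secondary point, needed to make the statement meaningful, is to verify that $R_\alpha\to0$ and that the concentration is genuinely captured at this scale, so that the extracted $v$, when nonzero, is a nontrivial solution of \eqref{4.14}.
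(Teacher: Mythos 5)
Your overall strategy coincides with the paper's: fix the scale $R_\alpha$ by a concentration function centered at the singular point $p$, rescale through the exponential map, get boundedness of $\hat\eta_\alpha\hat v_\alpha$ in $D^{1,2}(\mathbb{R}^n)$ from the scale-invariant Dirichlet energy bound, and identify the limit equation by testing the Palais--Smale relation against $R_\alpha^{\frac{2-n}{2}}\psi(R_\alpha^{-1}\exp_p^{-1}(\cdot))$, using $\rho_p(\exp_p(R_\alpha x))=R_\alpha|x|$, hypothesis $(\mathcal{H})$, continuity of $f$, and the smooth local convergence of the rescaled metric $\hat g_\alpha$ to the Euclidean one. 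The paper uses the gradient concentration function $r\mapsto\int_{B(p,r)}|\nabla v_\alpha|^2dv_g$ rather than your $2^*$-mass, an immaterial difference; your Hardy-inequality treatment of the singular term near the origin is in fact more careful than the paper's, which passes to the limit there without comment.

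There is, however, one genuine flaw: your claim that the strong convergence $v_\alpha\to0$ in $L_q(M)$, $q<2^*$, \emph{forces} $R_\alpha\to0$. It does not, and the failure is not a technicality. Nothing prevents the $L_{2^*}$-mass of $v_\alpha$ from concentrating at a point $x_0\neq p$ --- this is precisely the situation handled afterwards in the paper (Lemma 3.6, bubbles at points $x_\alpha\to x_0\neq p$). In that scenario your $\mu_\alpha(r)$ tends to $0$ for every $r<dist_g(p,x_0)$ and to the full mass for $r>dist_g(p,x_0)$, so your $R_\alpha$ converges to $dist_g(p,x_0)>0$, not to $0$. Since every ingredient of your limiting step (the convergences $h_\alpha(\exp_p(R_\alpha x))\to h_\infty(p)$, $f(\exp_p(R_\alpha x))\to f(p)$, $\hat g_\alpha\to\xi$, and the harmlessness of the cutoff on compact sets) requires $R_\alpha\to0$, the error propagates into the identification of equation \eqref{4.14}. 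The repair is what the paper does: extract the weak limit $v$ of $\hat\eta_\alpha\hat v_\alpha$ first, and note that if $R_\alpha$ stayed bounded away from zero along a subsequence, rescaling at a fixed positive scale carries weakly null sequences to weakly null sequences, so $v_\alpha\rightharpoonup0$ in $H_1^2(M)$ would give $v=0$; contrapositively, $v\neq0$ implies $R_\alpha\to0$. This conditional derivation is all that is needed, since the lemma's substantive conclusion concerns only the case $v\neq0$.
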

\begin{proof} Since the Palais-Smale sequence $v_\alpha$ of $J_\alpha$ at
level $\beta$ converges  weakly and not strongly  in $H_1^2(M)$ to
$0$, we get by Lemma 4.3  that $\beta\ge\beta^*$.\\
Write
\begin{equation*}
\int_M(|\nabla v_\alpha|^2-\frac{h_\alpha}{\rho^2_p}v_\alpha^2)dv_g=
\int_Mf|v_\alpha|^{2^*}dv_g+o(1)=n\beta+o(1),
\end{equation*}
since, up to a subsequence, $v_\alpha$ converges strongly to $0$ in
$L_2(M)$, we get by Hardy inequality \eqref{2.1} that for all
$\varepsilon>0$ small
$$n\beta^*+o(1)\le\int_M|\nabla v_\alpha|^2dv_g\le\frac{n\beta}{1-(h_\alpha(p)+\varepsilon)(K^2(n,2,-2)+\varepsilon)}+o(1).$$
In other words,
\begin{equation}\label{eq3.6}
c_1\le\int_M|\nabla v_\alpha|^2dv_g\le c_2,
\end{equation}
for some positive constants $c_1$ and $c_2$.\\
Let $\hat\delta$ a small positive constant such that
\begin{equation}\label{eq3.7}
    \underset{\alpha\to\infty}{\lim\sup}\int_M|\nabla v_\alpha|^2>\gamma.
\end{equation}
Up to a subsequence, for each $\alpha>0,$ we can find the smallest
constant $r_\alpha>0$  such that
$$\int_{B(p,r_\alpha)}|\nabla v_\alpha|^2dv_g=\gamma.$$
For a sequence of positive constants $R_\alpha$ and $x\in
B(R_\alpha^{-1}\delta_g)\subset\mathbb{R}^n$, define
\begin{eqnarray*}
  \hat{v}_\alpha(x) &=& R_\alpha^{\frac{n-2}{2}}v_\alpha(\exp_{p}(R_\alpha
  x)), \emph{and }
   \\
 \hat{g}_\alpha(x) &=& (\exp_{p}^*g)(R_\alpha x)).
\end{eqnarray*}
We follow the same arguments as in \cite{Druet-hebbey-robert}. Let
$z\in\mathbb{R}^n$ be such that $|z|+r<\delta_gR_\alpha^{-1}$, then
we have
\begin{equation*}
\int_{B(z,r)}|\nabla\hat{v}_\alpha|^2 dv_{\hat{g}}=\int_{\exp_{p}(
R_\alpha B(z,r))}|\nabla v_\alpha|^2 dv_g.
\end{equation*}
Let $0<r_o<\frac{\delta_g}{2}$ be such that for any $x,y\in
B(r_o)\subset\mathbb{R}^n$, the following inequality holds
\begin{equation}\label{4.15}
dist_g(\exp_p(x),\exp_p(y))\le C_o|x-y|,
\end{equation}
 for some positive
constant $C_o$. Also, for $r\in(0,r_o)$, take $R_\alpha$ be such
that $c_orR_\alpha=r_\alpha$, then we get
\begin{equation*}
\exp_p(R_\alpha B(C_or)))=B(p,C_orR_\alpha)
\end{equation*}
 and then
\begin{equation}\label{4.16}
\int_{B(C_or)}|\nabla \hat{v}_\alpha|^2 dv_{\hat{g}}=\int_{
B(p,r_\alpha)}|\nabla v_\alpha|^2 dv_g=\gamma.
\end{equation}
Take $\delta$ such that $0<\delta \le\min(C_or,\frac{\delta_g}{2})$,
there exists a positive constant such that, for all $u\in
D^{1,2}(\mathbb{R}^n)$ with $Supp( u)\in B(\delta R_\alpha^{-1})$,
the following inequalities hold
\begin{eqnarray}
\label{4.17}\frac{1}{C_1}\int_{\mathbb{R}^n}|\nabla u|^2dx \le
\int_{\mathbb{R}^n}|\nabla u|^2 dv_{\hat{g}}&\le&C_1\int_{\mathbb{R}^n}|\nabla u|^2dx, \emph{and}\\
 \label{4.18} \frac{1}{C_1}\int_{\mathbb{R}^n}|u|dx  \le\int_{\mathbb{R}^n}|u|
  dv_{\hat{g}}&\le&{C_1}\int_{\mathbb{R}^n}|u|dx
\end{eqnarray}
Define a sequence of cut-off functions $\hat{\eta}_\alpha$ by
$\hat{\eta}_\alpha(x)= \eta_{\delta}(R_\alpha x)$. Then, it follows
from \eqref{4.16}, \eqref{4.17} and \eqref{4.18} that the sequence
$\tilde{v}_\alpha=\hat{\eta}_\alpha \hat{v}_\alpha$ is bounded in
$D^{1,2}(\R^n)$. Consequently, up to a subsequence,
$\tilde{v}_\alpha$ converges weakly to some
function $v\in D^{1,2}(\R^n)$.\\
Suppose that $v\neq0$, since $v_\alpha$ converges weakly
to $0$, it follows that $R_\alpha\to 0$.\\
Let us first prove that  $v$ is a weak solution on $D^{1,2}(\R^n )$
to \eqref{4.14}. For this task, we let
 $\varphi\in\C^\infty_o(\mathbb{R}^n)$ be a function with compact
support included in the ball $B(\delta)$. For $\alpha$ large, define
on $M$ the sequence $\varphi_\alpha$ as
$$\varphi_\alpha(x)=R_\alpha^{\frac{2-n}{2}}\varphi(R_\alpha^{-1}(\exp^{-1}_{p}( x))).$$
Then, we have
\begin{eqnarray*}
  \int_M\nabla v_\alpha\nabla \varphi_\alpha dv_g &=&
  \int_{\mathbb{R}^n}\nabla\tilde{v}_\alpha\nabla\varphi dv_{\hat{g}_\alpha}, \\
  \int_M\frac{h_\alpha}{\rho_p^2}v_\alpha\varphi_\alpha dv_g &=&
  R_\alpha^{2}\int_{\mathbb{R}^n}\frac{h_\alpha(\exp_{p}(R_\alpha x) )}
  {dist_{\hat{g}_\alpha}^2(0, R_\alpha x)}\tilde{v}_\alpha\varphi dv_ {\hat{g}_\alpha}, \emph{ and }\\
   \int_M f|v_\alpha|^{2^*-2}v_\alpha\varphi_\alpha
   dv_g&=&\int_{\mathbb{R}^n}f(\exp_{p}
   ( R_\alpha x))|\tilde{v}_\alpha|^{2^*-2}\tilde{v}_\alpha\varphi
   dv_ {\hat{g}_\alpha}.
\end{eqnarray*}
When tending $\alpha$ to $\infty$,  $\hat{g}_\alpha$ tends smoothly
to the Euclidean metric on $\R^n$, then by passing to the limit when
$\alpha\to\infty$ and since $v_\alpha$ is a Palais-Smale sequence of
$J_\alpha$, we get that $v$ is weak solution  of \eqref{4.14}.
\end{proof}
\begin{Lemma} Let $v$ be the solution  of \eqref{4.14} given by Lemma 3.4, then
up to a subsequence,
\begin{equation*}
w_\alpha=v_\alpha-R_\alpha^{\frac{2-n}{2}}\eta_\delta(\exp^{-1}(x))
v(R_\alpha^{-1}\exp_{p}^{-1}(x)),
\end{equation*}
 where
$0<\delta<\frac{\delta_g}{2}$,  is a Palais-Sequence for $J_\alpha$
at level $\beta-G_\infty(v)$ that weakly converges to $0$ in
$H_1^2(M).$
\end{Lemma}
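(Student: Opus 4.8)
The plan is to carry out the standard ``bubble extraction'' step: the previous lemma blows up $v_\alpha$ at $p$ at scale $R_\alpha\to0$ and produces a nontrivial profile $v$ solving \eqref{4.14}; I now subtract the corresponding concentrating bubble and show the remainder is again Palais-Smale, with energy dropping by exactly $G_\infty(v)$. Write $B_\alpha(x)=R_\alpha^{\frac{2-n}{2}}\eta_\delta(\exp_p^{-1}(x))\,v(R_\alpha^{-1}\exp_p^{-1}(x))$, so $w_\alpha=v_\alpha-B_\alpha$, and record the rescaling identity $R_\alpha^{\frac{n-2}{2}}B_\alpha(\exp_p(R_\alpha y))=\hat\eta_\alpha(y)v(y)$ with $\hat\eta_\alpha(y)=\eta_\delta(R_\alpha y)$. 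Under $x=\exp_p(R_\alpha y)$ one then has $\hat v_\alpha(y)=R_\alpha^{\frac{n-2}{2}}v_\alpha(\exp_p(R_\alpha y))$, $\hat w_\alpha=\hat v_\alpha-\hat\eta_\alpha v$, and $\rho_p(\exp_p(R_\alpha y))=R_\alpha|y|$ in normal coordinates. Three things must be verified: $w_\alpha\weakly0$ in $H_1^2(M)$, $J_\alpha(w_\alpha)=\beta-G_\infty(v)+o(1)$, and $DJ_\alpha(w_\alpha)\to0$ in $(H_1^2(M))'$. For the weak convergence, since $v_\alpha\weakly0$ by hypothesis it suffices to note $B_\alpha\weakly0$: this is the standard fact that a profile concentrating at the single point $p$ (because $R_\alpha\to0$) is bounded in $H_1^2(M)$ and tends to $0$ in $L_2(M)$, so any weak limit vanishes.

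For the energy level I would expand the quadratic part of $J_\alpha(v_\alpha-B_\alpha)$ and treat the three term types separately. The Dirichlet self and cross terms are handled in the rescaled picture: by scale invariance of the Dirichlet integral and $\hat g_\alpha\to$ Euclidean smoothly on compacts, $\int_M|\nabla B_\alpha|^2dv_g=\int_{\mathbb{R}^n}|\nabla(\hat\eta_\alpha v)|^2dv_{\hat g_\alpha}\to\int_{\mathbb{R}^n}|\nabla v|^2dx$, while $\hat v_\alpha\weakly v$ locally (since $\hat\eta_\alpha\equiv1$ on fixed compacts for large $\alpha$) gives $\int_M\nabla v_\alpha\cdot\nabla B_\alpha\,dv_g\to\int_{\mathbb{R}^n}|\nabla v|^2dx$ as a weak--strong pairing; together these yield the net Dirichlet contribution $-\tfrac12\int|\nabla v|^2$. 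The singular terms are identical after using $\rho_p^2=R_\alpha^2|y|^2$, $h_\alpha(\exp_p(R_\alpha y))\to h_\infty(p)$ and $\hat\eta_\alpha\to1$: both $\int_M\frac{h_\alpha}{\rho_p^2}B_\alpha^2\,dv_g$ and $\int_M\frac{h_\alpha}{\rho_p^2}v_\alpha B_\alpha\,dv_g$ converge to $h_\infty(p)\int_{\mathbb{R}^n}v^2/|x|^2\,dx$, producing the net term $+\tfrac{h_\infty(p)}2\int v^2/|x|^2$. Finally the critical term is split by the Brezis--Lieb lemma, exactly as for the term $\Phi$ in the proof of Lemma 3.2, together with the scaling $\int_M f|B_\alpha|^{2^*}dv_g\to f(p)\int_{\mathbb{R}^n}|v|^{2^*}dx$, giving $\int_M f|w_\alpha|^{2^*}=\int_M f|v_\alpha|^{2^*}-f(p)\int_{\mathbb{R}^n}|v|^{2^*}+o(1)$. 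Assembling these and recalling $J_\alpha(v_\alpha)=\beta+o(1)$ yields $J_\alpha(w_\alpha)=\beta-G_\infty(v)+o(1)$, with $G_\infty$ as defined before the theorem.

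For the Palais--Smale property I would write, for $\varphi\in H_1^2(M)$,
\[
DJ_\alpha(w_\alpha)\varphi=DJ_\alpha(v_\alpha)\varphi-\Big(\int_M\nabla B_\alpha\cdot\nabla\varphi-\int_M\tfrac{h_\alpha}{\rho_p^2}B_\alpha\varphi\Big)-\int_M f\big(|v_\alpha|^{2^*-2}v_\alpha-|w_\alpha|^{2^*-2}w_\alpha-|B_\alpha|^{2^*-2}B_\alpha\big)\varphi+\int_M f|B_\alpha|^{2^*-2}B_\alpha\varphi.
\]
The first term is $o(1)\|\varphi\|$ because $v_\alpha$ is Palais--Smale. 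The bracketed linear bubble action, after rescaling $\varphi$ to $\psi$, equals $\int_{\mathbb{R}^n}\nabla v\cdot\nabla\psi-h_\infty(p)\int v\psi/|x|^2+o(1)\|\varphi\|$, which by the equation \eqref{4.14} satisfied by $v$ is $f(p)\int_{\mathbb{R}^n}|v|^{2^*-2}v\,\psi$, i.e.\ precisely the last term $\int_M f|B_\alpha|^{2^*-2}B_\alpha\varphi$ under the same rescaling; these two cancel. The remaining difference of nonlinearities is $o(1)\|\varphi\|$ by the pointwise inequality $\big||a+b|^{2^*-2}(a+b)-|a|^{2^*-2}a-|b|^{2^*-2}b\big|\le C(|a|^{2^*-2}|b|+|b|^{2^*-2}|a|)$, Hölder, and the compact embedding $H_1^2(M)\hookrightarrow L_q(M)$ for $q<2^*$, exactly as in Lemma 3.2. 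Hence $DJ_\alpha(w_\alpha)\to0$, completing the three checks.

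The main obstacle, where real care is needed, is the singular term: both the self- and cross-energies of the bubble carry the factor $1/\rho_p^2$ concentrating at $p$, so their limits must be justified by dominated convergence controlled through the Hardy inequality \eqref{2.1} and its local version, rather than by naive passage to the limit, and one must check $\mathrm{dist}^2_{\hat g_\alpha}(0,R_\alpha y)/(R_\alpha|y|)^2\to1$. A secondary technical nuisance is the cut-off $\eta_\delta$: the set where $\nabla\eta_\delta\neq0$ sits at Euclidean distance $\sim\delta/R_\alpha\to\infty$ in the rescaled picture, and one verifies its contribution is $o(1)$ using $v\in D^{1,2}(\mathbb{R}^n)$ and $|\nabla\eta_\delta|\le C/\delta$, so that $\hat\eta_\alpha v\to v$ strongly in $D^{1,2}(\mathbb{R}^n)$, which is what legitimizes all the weak--strong pairings above.
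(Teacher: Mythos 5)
Your strategy is the same as the paper's: subtract the rescaled bubble $\B_\alpha$, show $\B_\alpha\weakly 0$ in $H^2_1(M)$, compute the energy drop by expanding the Dirichlet, Hardy and critical terms with a tail control $\varepsilon_R$ and the local $C^1$ convergence of $\hat g_\alpha$ to the Euclidean metric, and use the fact that $v$ solves \eqref{4.14} to cancel the bubble's contribution to the derivative. Your verification of the Palais--Smale property is actually more explicit than the paper's, which contents itself with proving $DJ_\alpha(\B_\alpha)\to 0$ and never isolates the nonlinear interaction between $w_\alpha$ and $\B_\alpha$. (Minor slip: in your displayed identity for $DJ_\alpha(w_\alpha)\varphi$ the coupling term should read $\int_M f\bigl(|w_\alpha|^{2^*-2}w_\alpha-|v_\alpha|^{2^*-2}v_\alpha+|\B_\alpha|^{2^*-2}\B_\alpha\bigr)\varphi\,dv_g$ with the opposite sign; harmless, since you estimate it in absolute value.)

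The genuine gap is the claim that this coupling term is $o(1)\|\varphi\|_{H^2_1(M)}$ ``exactly as in Lemma 3.2.'' It is not. In Lemma 3.2 the cross terms are $|v_\alpha|^{2^*-2}|u|$ and $|u|^{2^*-2}|v_\alpha|$ with $u$ a \emph{fixed} function, so H\"older leaves one factor in a subcritical $L^q$, $q<2^*$, and the compact embedding finishes the argument. Here both factors depend on $\alpha$ and the bubble concentrates: $\|\B_\alpha\|_{L^{2^*}(M)}$ stays bounded away from zero, and any H\"older splitting of $\||w_\alpha|^{2^*-2}\B_\alpha\|_{L^{2^*/(2^*-1)}(M)}$ forces the exponents $\frac{n+2}{4}$ and $\frac{n+2}{n-2}$, which put \emph{both} factors exactly in $L^{2^*}$; no subcritical norm ever appears, so compactness of the embedding gives nothing, and boundedness is all you get. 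To close this step you must use the concentration structure itself: split $M$ into $B(p,RR_\alpha)$ and its complement; on the exterior the bubble is controlled by the tail $\|v\|_{L^{2^*}(\R^n\setminus B(R))}=\varepsilon_R$; on the interior, rescale at scale $R_\alpha$ (the scaling factors cancel exactly, by criticality) and pair $|\hat w_\alpha|^{(2^*-2)\frac{2^*}{2^*-1}}$ --- bounded in $L^{\frac{n+2}{4}}(B(R))$ and tending to $0$ a.e., hence weakly --- against the fixed function $|v|^{\frac{2^*}{2^*-1}}\in L^{\frac{n+2}{n-2}}(B(R))$. This is the argument carried out in the reference \cite{Druet-hebbey-robert}, and it is the one point where your write-up, as it stands, would not survive scrutiny --- though, to be fair, the paper's own proof omits this term entirely.
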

\begin{proof}
 For $0<\delta<\frac{\delta_g}{2}$, define
\begin{equation*}
\B_\alpha(x)=R_\alpha^{\frac{2-n}{2}}\eta_\delta(\exp_p^{-1}(x))
v(R_\alpha^{-1}\exp_p^{-1}( x)),x\in M
\end{equation*}
and put
\begin{equation*}
w_\alpha=v_\alpha-\B_\alpha.
\end{equation*}
We begin proving that  $w_\alpha$ converges weakly to $0$ in
$H^2_1(M)$, it suffices to prove that $\B_\alpha$ does. Take a
function $\varphi\in\C^{\infty}(M)$, then we have
\begin{eqnarray*}
&\int_{B(p,2\delta)}\left(\nabla \B_\alpha\nabla\varphi+
\B_\alpha\varphi\right)dv_g&\\&=
R_\alpha^{\frac{n}{2}}\int_{B(2\delta R_\alpha^{-1})}[R_\alpha
v(x)(\nabla\eta_\delta)(R_\alpha x)+\eta_\delta(R_\alpha x)\nabla
v]\nabla \varphi(\exp_p(R_\alpha x))
dv_{\hat{g}_\alpha}&\\&+R_\alpha^{\frac{n+2}{2}}\int_{B(2\delta
R_\alpha^{-1})} v\eta_\delta(R_\alpha x)\varphi(\exp_p(R_\alpha
x))dv_{\hat{g}_\alpha},&
\end{eqnarray*}
then, for a positive constant $C'$ such that $dv_{\hat{g}_\alpha}\le
C'dx$, it follows that
\begin{eqnarray*}
&\int_{B(p,2\delta)}\left(\nabla \B_\alpha\nabla\varphi+
\B_\alpha\varphi\right)dv_g\\&\le
C'R_\alpha^{\frac{n}{2}}[\sup_M|\nabla\varphi|\int_{\R^n}(|\nabla
v|+|v|C\delta^{-1})dx+ R_\alpha\sup_M|\varphi|\int_{\R^n}|v|)dx].&
\end{eqnarray*}
 Thus, when tending $\alpha\to \infty$, we ge that $\B_\alpha\to0 $ weakly in $H^2_1(M)$.\\
Now, let us evaluate $J_\alpha(w_\alpha)$. First, we have
\begin{eqnarray*}
\int_{M}|\nabla w_\alpha|^2dv_g&=&\int_{M\setminus
B(p,2\delta)}|\nabla v_\alpha|^2dv_g+\int_{B(p,2\delta)}|\nabla
(v_\alpha-\B_\alpha)|^2dv_g,
\end{eqnarray*}
and of course
\begin{eqnarray*}
  &&\int_{B(p,2\delta)}|\nabla
(v_\alpha-\B_\alpha)|^2dv_g\\ &=& \int_{B(p,2\delta)}|\nabla
v_\alpha|^2dv_g-2\int_{B(p,2\delta)}\nabla v_\alpha\nabla\B_\alpha
dv_g+\int_{B(p,2\delta)}|\nabla \B_\alpha|^2dv_g.
\end{eqnarray*}
Direct calculation gives
\begin{eqnarray*}
&\\&\int_{B(p,2\delta)}|\nabla \B_\alpha|^2dv_g=\int_{B( 2\delta
R_\alpha^{-1})}\eta^2_\delta(R_\alpha x)|\nabla
v|^2dv_{\hat{g}_\alpha}+&\\&R_\alpha^2\int_{B( 2\delta
R_\alpha^{-1})}v^2|\nabla\eta_{\delta }|^2(R_\alpha
x)dv_{\hat{g}_\alpha}+2R_\alpha\nabla\eta_{\delta }(R_\alpha
x)\nabla vdv_{\hat{g}_\alpha}.&
\end{eqnarray*}
It can be easily seen that the second term of right-hand side member
of the above equality tends to $0$ as $\alpha\to \infty $.
Furthermore,  for $R>0$, a positive constant, we write
\begin{equation*}
\int_{B( 2\delta R_\alpha^{-1})}\eta^2_\delta(R_\alpha x)|\nabla
v|^2dv_{\hat{g}_\alpha}=\int_{B( R)}\eta^2_\delta(R_\alpha x)|\nabla
v|^2dv_{\hat{g}_\alpha}+\int_{\mathbb{R}^n\setminus
B(R)}\eta^2_\delta(R_\alpha x)|\nabla v|^2dv_{\hat{g}_\alpha}.
\end{equation*}
 with
\begin{equation*}
\int_{\mathbb{R}^n\setminus B(R)}\eta^2_\delta(R_\alpha x)|\nabla
v|^2dv_{\hat{g}_\alpha}\le C \int_{\mathbb{R}^n\setminus
B(R)}|\nabla v|^2dx=\varepsilon_R,
\end{equation*}
where $\varepsilon_R$ is a function in $R$ such that $ \varepsilon_R\to 0 $ as $R\to\infty$.\\
Noting that, that ${\hat{g}_\alpha}$ goes locally in $C^1$ to the
Euclidean metric $\xi$, we get then
\begin{equation}
\int_{B(p,2\delta)}|\nabla
\B_\alpha|^2dv_g=\int_{\mathbb{R}^n}|\nabla
v|^2dx+o(1)+\varepsilon_R.
\end{equation}\label{3.15}
Moreover, we have
\begin{eqnarray}\label{3.16}
&\int_{B(p,2\delta)}\nabla v_\alpha\nabla\B_\alpha
dv_g=\int_{B(2\delta R_\alpha^{-1})}\nabla(\eta_\delta(R_\alpha x)
\hat{v}_\alpha)\nabla vdv_{\hat{g}_\alpha}
&\\&\nonumber+R_\alpha\int_{B(2\delta R_\alpha^{-1})}(v\nabla
\hat{v}_\alpha -\hat{v}_\alpha\nabla v)\nabla\eta_{\delta}(R_\alpha
x) dv_{\hat{g}_\alpha}&
\end{eqnarray}
with
\begin{eqnarray*}
 && |\int_{B(2\delta
R_\alpha^{-1})}\nabla\eta_\delta(R_\alpha x)(v\nabla \hat{v}_\alpha
-\hat{v}_\alpha\nabla v)dv_{\hat{g}_\alpha}|\\
 &\le&
c\delta^{-1}\left[\right.\int_{B(2\delta R_\alpha^{-1})} |\nabla
\hat{v}_\alpha|^2 dv_
{\hat{g}_\alpha})^{\frac{1}{2}}(\int_{B(2\delta R_\alpha^{-1})} v^2
dx)^{\frac{1}{2}}\\  &+&(\int_{B(2\delta
R_\alpha^{-1})}\hat{v}_\alpha^2
dv_{\hat{g}_\alpha})^{\frac{1}{2}}(\int_{B(2\delta R_\alpha^{-1}))}
|\nabla v|^2 dx)^{\frac{1}{2}}\left.\right].
\end{eqnarray*}
Since $v_\alpha$ is bounded in $H^2_1(M)$, the quantities
$\int_{B(2\delta R_\alpha^{-1})} |\nabla \hat{v}_\alpha|^2 dv_
{\hat{g}_\alpha}$ and$\int_{B(2\delta R_\alpha^{-1})} |
\hat{v}_\alpha|^2 dv_ {\hat{g}_\alpha}$ are bounded and hence the
second term of the right-hand side member of \eqref{3.16} is $o(1)$.
Thus, by using the weak convergence of
$\hat{\eta}_\alpha\hat{v}_\alpha$ to $v$ in $D^{1,2}(\R^n)$ that
\begin{equation*}
\int_{B(p,\delta)}\nabla v_\alpha\nabla\B_\alpha
dv_g=\int_{\mathbb{R}^n}|\nabla v|^2dx+o(1).
\end{equation*}
so that
\begin{equation*}
\int_{M}|\nabla w_\alpha|^2dv_g=\int_{M}|\nabla
v_\alpha|^2dv_g-\int_{\mathbb{R}^n}|\nabla
v|^2dx+o(1)+\varepsilon_R.
\end{equation*}
In the same vain, for $R$ a positive constant and $\alpha$ large, we
write
\begin{equation*}
\int_{B(p,2\delta)}\frac{h_\alpha}{\rho_p^2}\B_\alpha^2dv_g=
\int_{B(p,RR_\alpha)}\frac{h_\alpha}{\rho_p^2}\B_\alpha^2dv_g+
\int_{B(p,2\delta)\setminus
B(p,RR_\alpha)}\frac{h_\alpha}{\rho_p^2}\B_\alpha^2dv_g
\end{equation*}
with
\begin{equation*}
\int_{B(p,2\delta)\setminus
B(p,RR_\alpha)}\frac{h_\alpha}{\rho_p^2}\B_\alpha^2dv_g\le C
(RR_\alpha)^{-2}\int_{B(p,2\delta)\setminus
B(p,RR_\alpha)}\B_\alpha^2dv_g
\end{equation*}
then, by a direct calculations, we get
\begin{equation*}
\int_{B(p,2\delta)\setminus
B(p,RR_\alpha)}\frac{h_\alpha}{\rho_p^2}\B_\alpha^2dv_g\le C
R^{-2}\int_{\R^n\setminus B(R)}v^2dx=\varepsilon_R.
\end{equation*}
Hence,
\begin{eqnarray*}
\int_{B(p,2\delta)}\frac{h_\alpha}{\rho_p^2}\B_\alpha^2&=&
R_\alpha^2\int_{B(R)} \frac{h_\alpha(\exp_p(R_\alpha
x))}{(dist_{\hat{g}_\alpha}(0,R_\alpha x)^2}
\eta^2_\alpha(R_\alpha x) v^2dv_{\hat{g}_\alpha}+\varepsilon_R\\
   &=&h_\infty(p)\int_{\mathbb{R}^n}\frac{v^2}{|x|^2}dx+o(1)+\varepsilon_R.
\end{eqnarray*}
Also, in similar way, since $v_\alpha$ is bounded in $H^2_1(M)$,
after using Hölder and Hardy inequalities, we can easily have
\begin{equation*}
\int_{B(p,2\delta)\setminus
B(p,RR_\alpha)}\frac{h_\alpha}{\rho_p^2}v_\alpha\B_\alpha dv_g\le C
R^{-2}\int_{\R^n\setminus B(R)}v^2dv_g=\varepsilon_R,
\end{equation*}
which yields
\begin{eqnarray*}
\int_{B(p,\delta)}\frac{h_\alpha}{\rho_p^2}v_\alpha\B_\alpha dv_g&=&
R_\alpha^2\int_{B(R)} \frac{h_\alpha(\exp_p(R_\alpha
x))}{(dist_{\hat{g}_\alpha}(0,R_\alpha x))^2}
(\eta(R_\alpha x)\hat{v}_\alpha) vdv_{\hat{g}_\alpha}+\varepsilon_R\\
   &=&h_\infty(p)\int_{\mathbb{R}^n}\frac{v^2}{|x|^2}dx+o(1)+\varepsilon_R.
\end{eqnarray*}
so that in the end we obtain
\begin{equation*}
\int_M\frac{h_\alpha}{\rho_p^2}w_\alpha^2dv_g=
\int_M\frac{h_\alpha}{\rho_p^2}v_\alpha^2dv_g-h_\infty(p)\int_{\mathbb{R}^n}\frac{v^2}{|x|^2}dx+o(1)+\varepsilon_R.
\end{equation*}
In similar way, we can prove that
\begin{equation*}
\int_M|w_\alpha|^{2^*}dv_g=\int_M|v_\alpha|^{2^*}dv_g-f(p)\int_M|v|^{2^*}dv_g+o(1)+\varepsilon_R,
\end{equation*}
Finally, since $R$ is arbitrary,  when summing up we obtain
\begin{equation*}
J_\alpha(w_\alpha)=J_\alpha(u_\alpha)-G_\infty(v)+o(1)=\beta-G_\infty(v)+o(1).
\end{equation*}
It remains to prove that  $DJ_\alpha(\B_\alpha)\to 0$ in
$H_1^2(M)'$. Let $\varphi\in
 H_1^2(M)$, for $x\in B(\delta R_\alpha^{-1})$ put $\varphi_\alpha(x)=R_\alpha^{\frac{n-2}{2}}\varphi(\exp_p(R_\alpha
x))$ and $\overline{\varphi}_\alpha(x)=\eta_\delta(R_\alpha
x))\varphi_\alpha (x)$, then we have
\begin{eqnarray*}
\int_{B(p,2\delta)} \nabla \B_\alpha\nabla\varphi
dv_g=\int_{B(2\delta R_\alpha^{-1})} \nabla
v\nabla\overline{\varphi}_\alpha
dv_{\hat{g}_\alpha}&&\\+R_\alpha\int_{B(2\delta R_\alpha^{-1})}
\nabla\eta_\delta(R_\alpha
x)(v\nabla\varphi_\alpha-\varphi_\alpha\nabla v)dv_{\hat{g}_\alpha}.
  \end{eqnarray*}
Knowing that $\int_{B(p,2\delta)} |\nabla\varphi|^2
dv_g=\int_{B(2\delta R_\alpha^{-1})}  |\nabla\varphi_\alpha|^2
dv_{\hat{g}_\alpha}$, we get that
\begin{equation*}
\int_{B(2\delta R_\alpha^{-1})}| \nabla\eta_\delta(R_\alpha
x)(v\nabla\varphi_\alpha-\varphi_\alpha\nabla
v)|dv_{\hat{g}_\alpha}\le C||\varphi||_{H^2_1(M)},
\end{equation*}
which gives that
\begin{equation*}
\int_{B(p,2\delta)} \nabla \B_\alpha\nabla\varphi
dv_g=\int_{B(2\delta R_\alpha^{-1})} \nabla
v\nabla\overline{\varphi}_\alpha dv_{\hat{g}_\alpha}+o(||\varphi
||_{H_1^2(M)}).
\end{equation*}
Next, for $R>0$ write
\begin{equation*}
\int_{B(2\delta R_\alpha^{-1})} \nabla
v\nabla\overline{\varphi}_\alpha dv_{\hat{g}_\alpha}=\int_{B(R)}
\nabla v\nabla\overline{\varphi}_\alpha
dv_{\hat{g}_\alpha}+\int_{B(2\delta R_\alpha^{-1})\setminus B(R)}
\nabla v\nabla\overline{\varphi}_\alpha dv_{\hat{g}_\alpha},
\end{equation*}
note that
\begin{eqnarray*}
  \int_{B(2\delta R_\alpha^{-1})\setminus B(R)}
\nabla v\nabla\overline{\varphi}_\alpha dv_{\hat{g}_\alpha}&\le&
C||\varphi||_{H^2_1(M)}(\int_{B(2\delta R_\alpha^{-1})\setminus
B(R)} |\nabla v|^2 dx)^\frac{1}{2}\\&=& O(||\varphi||_{H^2_1(M)}
)\varepsilon(R),
\end{eqnarray*}
where  $\varepsilon_R\to 0$ as $R\to \infty$. Since the sequence of
metrics ${\hat{g}_\alpha} $ tends locally in $C^1$ when
$\alpha\to\infty$ to the Euclidean metric, we obtain
\begin{equation*}
\int_{B(p,2\delta)} \nabla \B_\alpha\nabla\varphi dv_g=\int_{\R^n}
\nabla v\nabla\overline{\varphi}_\alpha dx+o(||\varphi
||_{H_1^2(M)})+O(||\varphi||_{H^2_1(M)} )\varepsilon(R).
\end{equation*}
Moreover, for a given  $R>0$, we have for $\alpha$ large,
\begin{equation*}
\int_{B(p,2\delta)}\frac{h_\alpha}{\rho_p^2}\B_\alpha\varphi dv_g=
\int_{B(p,RR_\alpha)}\frac{h_\alpha}{\rho_p^2}\B_\alpha\varphi
dv_g+\int_{B(p,2\delta)\setminus
B(p,RR_\alpha)}\frac{h_\alpha}{\rho_p^2}\B_\alpha\varphi dv_g.
\end{equation*}
 On the one hand, we have
\begin{equation*}
\int_{B(p,2\delta)\setminus
B_p(RR_\alpha)}\frac{h_\alpha}{\rho_p^2}\B_\alpha\varphi dv_g\le
\frac{C}{(RR_\alpha)^2}||\varphi||_{H_1^2(M)}\int_{B(p,2\delta)\setminus
B(p,RR_\alpha)}\B_\alpha^2dv_g,
\end{equation*}
and a straightforward computation  shows that
\begin{equation*}
\int_{B(p,2\delta)\setminus B(p,RR_\alpha)}|\B_\alpha|^2dv_g\le C
R_\alpha^2\int_{B(2\delta R_\alpha^{-1})\setminus B(R)}v^2dx,
\end{equation*}
  which implies that
\begin{equation*}
   \int_{B(p,2\delta)\setminus
B(p,RR_\alpha)}\frac{h_\alpha}{\rho_p^2}\B_\alpha\varphi dv_g=O(
||\varphi||_{H_1^2(M)})\varepsilon_R
\end{equation*}
with
$\varepsilon_R \to 0$ as $R\to \infty$. \\
On the other hand,  we have
\begin{equation*}
\int_{B(p,RR_\alpha)}\frac{h_\alpha}{\rho_p^2}\B_\alpha\varphi dv_g=
R_\alpha^{2}\int_{B(R)}\frac{h_\alpha(\exp_pR_\alpha
x)}{(dist_{\hat{g}_\alpha}(0,R_\alpha
x))^2}v\overline{\varphi}dv_{\hat{g}}.
\end{equation*}
which leads to
\begin{eqnarray*}
  \int_{B_p(RR_\alpha)}\frac{h_\alpha}{\rho_p^2}\B_\alpha\varphi
dv_g &=&\int_{B(R)}\frac{h_\infty(p)
}{|x|^2}v\overline{\varphi}dx+o(||\varphi ||_{H_1^2(M)}) \\
   &=&\int_{\mathbb{R}^n}\frac{h_\infty(p)
}{|x|^2}v\overline{\varphi}dx-\int_{\mathbb{R}^n\setminus
B(R)}\frac{h_\infty(p) }{|x|^2}v\overline{\varphi}dx +o(||\varphi
||_{H_1^2(M)}),
\end{eqnarray*}
with
\begin{eqnarray*}
\int_{\mathbb{R}^n\setminus B(R)}\frac{h_\infty(p)
}{|x|^2}v\overline{\varphi}dx&\le&
\frac{C}{R^2}||\varphi||_{H_1^2(M)}\\ &=&O(
||\varphi||_{H_1^2(M)})\varepsilon_R.
\end{eqnarray*}
so that \begin{equation*}
   \int_{B(p,2\delta)}\frac{h_\alpha}{\rho_p^2}\B_\alpha\varphi dv_g= \int_{\mathbb{R}^n}\frac{h_\infty(p)
}{|x|^2}v\overline{\varphi}dx+o(||\varphi ||_{H_1^2(M)})+O(
||\varphi||_{H_1^2(M)})\varepsilon_R.
\end{equation*}
In the same way, we can also have
\begin{equation*}
    \int_{B(p,2\delta)}f|\B_\alpha|^{\frac{4}{n-2}}\B_\alpha\varphi dv_g =
f(p)\int_{\mathbb{R}^n}|v|^{\frac{4}{n-2}}v\overline{\varphi}_\alpha
dx+o(||\varphi ||_{H_1^2(M)}) +O(||\varphi
||_{H_1^2(M)})\varepsilon_R.
\end{equation*}
Summing up, we obtain
\begin{eqnarray*}
&\int_{B(p,2\delta)}(\nabla\B_\alpha\nabla \varphi
dv_g+\frac{h_\alpha}{\rho_p^2}\B_\alpha\varphi)dv_g-
\int_{B(p,2\delta)}f|\B_\alpha|^{\frac{4}{n-2}}\B_\alpha\varphi
dv_g& \\&= \int_{\R^n}( \nabla v\nabla\overline{\varphi}_\alpha
dx+\frac{h_\infty(p) }{|x|^2}v\overline{\varphi}_\alpha)dx-
  f(p)\int_{\mathbb{R}^n}|v|^{\frac{4}{n-2}}v\overline{\varphi}_\alpha dx&\\&+o(||\varphi
||_{H_1^2(M)}) +O(||\varphi ||_{H_1^2(M)})\varepsilon_R,&
\end{eqnarray*}
and since $v$ is  weak solution of \eqref{0.1}, we get the desired
result.
\end{proof}
Keeping the notations adapted above, we prove the following lemma
\begin{Lemma} Let $v_\alpha$ a Palais-Smale sequence for $J_\alpha$ at level $\beta$.
Suppose that the sequence
$\tilde{v}=\hat{\eta}_\alpha\hat{v}_\alpha$ of the above lemma
converges weakly to $0$ in $D^{1,2}(\R^n)$. Then, there exist a
sequence of positive numbers $\{\t_\alpha\},\t_\alpha\to0$ and a
sequence of points $x_i\in M, x_i\to x_o\in M\setminus\{p\}$ such
that up to a subsequence, the sequence $\eta_\delta(\t_\alpha
x)\nu_\alpha$, with $\delta$ is some constant and
\begin{equation*}
 \nu_\alpha=\t_\alpha^{\frac{n-2}{2}} v_\alpha(\exp_{x_i}(\t_\alpha x ))
\end{equation*}
 converges weakly to a nontrivial weak solution $\nu$ of the Euclidean
equation
\begin{equation}\label{eq3.8}
\Delta\nu=f(x_o)|\nu|^{\frac{4}{n-2}}\nu
\end{equation}
 and the sequence
\begin{equation*}
    \W_\alpha=v_\alpha-\t_\alpha^{\frac{2-n}{2}}\eta_\delta(\exp_{x_i}^{-1}(x))\nu(\t_\alpha^{-1}\exp_{x_i}^{-1}(x))
\end{equation*}
is a Palais-Smale sequence for $J_\alpha $ at level
$\beta-f(x_o)^{\frac{4}{n-2}}G(\nu)$ that converges weakly to $0$ in
$H^1_2(M)$.
\end{Lemma}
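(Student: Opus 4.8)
The plan is to run the blow-up scheme of Lemmas 3.4 and 3.5 again, but with the rescaling centered at a moving point $x_\alpha$ (the $x_i$ of the statement) rather than at $p$; the whole point is that once the concentration is located away from $p$, the potential $\frac{h_\alpha}{\rho_p^2}$ degenerates into a lower-order term and the limiting equation loses its Hardy potential, giving \eqref{eq3.8} in place of \eqref{4.14}. Concretely, I would fix a small threshold $\gamma>0$ (below the Sobolev threshold appearing in Lemma 3.3) and, following \cite{Druet-hebbey-robert}, define $\tau_\alpha>0$ as the smallest radius for which
\[ \sup_{x\in M}\int_{B(x,\tau_\alpha)}|\nabla v_\alpha|^2\,dv_g=\gamma, \]
the supremum being attained at some $x_\alpha$, and pass to a subsequence so that $x_\alpha\to x_o$. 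With $\nu_\alpha(x)=\tau_\alpha^{\frac{n-2}{2}}v_\alpha(\exp_{x_\alpha}(\tau_\alpha x))$ and $\hat g_\alpha=(\exp_{x_\alpha}^{*}g)(\tau_\alpha\,\cdot)$, the estimates \eqref{4.16}--\eqref{4.18} used for $\tilde v_\alpha$ in Lemma 3.4 apply verbatim and show that $\eta_\delta(\tau_\alpha x)\nu_\alpha$ is bounded in $D^{1,2}(\R^n)$, hence converges weakly (after extraction) to some $\nu\in D^{1,2}(\R^n)$.

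The first substantial step is the nontriviality of $\nu$, and this is where the smallness of $\gamma$ is used. If $\nu=0$ then $\nu_\alpha\to0$ in $L^2_{loc}(\R^n)$; testing the rescaled Palais-Smale identity against $\eta^2\nu_\alpha$ for a cut-off $\eta$ on the unit ball, the lower-order and singular contributions (the latter carrying the factor $\tau_\alpha^2/\rho_p^2$) tend to $0$, while the critical term is absorbed through the Sobolev inequality \eqref{1.1} precisely because $\gamma$ is small; this forces $\int_{B(1)}|\nabla\nu_\alpha|^2\,dv_{\hat g_\alpha}\to0$, contradicting the normalization $\int_{B(1)}|\nabla\nu_\alpha|^2\,dv_{\hat g_\alpha}=\gamma$ (the analogue of \eqref{4.16}). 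So $\nu\neq0$. Since $v_\alpha\rightharpoonup0$ in $H_1^2(M)$, a rescaling by a bounded factor would carry this weak limit onto $\nu$; as $\nu\neq0$ this excludes $\tau_\alpha$ staying bounded away from $0$, so $\tau_\alpha\to0$, exactly as $R_\alpha\to0$ was deduced in Lemma 3.4.

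I then have to locate $x_o$, and I expect this to be the \emph{main obstacle}. The decisive quantity is $\mathrm{dist}_g(x_\alpha,p)/\tau_\alpha$, which is the Riemannian counterpart of the ratio $|y_\alpha|/\varepsilon_\alpha$ discussed in the introduction. If it remained bounded, the bubble would sit within bounded rescaled distance of $p$ and the $p$-centered rescaling $\tilde v_\alpha$ of Lemma 3.4, taken at the matching scale, would itself inherit a nonzero weak limit (a translate of $\nu$), against the standing hypothesis $\tilde v_\alpha\rightharpoonup0$; hence $\mathrm{dist}_g(x_\alpha,p)/\tau_\alpha\to\infty$, which gives $x_o\neq p$ and, crucially, makes the rescaled coefficient
\[ \tau_\alpha^2\,\frac{h_\alpha(\exp_{x_\alpha}(\tau_\alpha x))}{\mathrm{dist}_{\hat g_\alpha}^{2}(0,\tau_\alpha x)} \]
tend to $0$ uniformly on every fixed ball. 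Granting this separation, I would test \eqref{eq1.1} against $\varphi_\alpha(x)=\tau_\alpha^{\frac{2-n}{2}}\varphi\big(\tau_\alpha^{-1}\exp_{x_\alpha}^{-1}(x)\big)$ as in Lemma 3.4: the gradient term converges to $\int_{\R^n}\nabla\nu\nabla\varphi\,dx$ since $\hat g_\alpha\to\xi$ in $C^1_{loc}$, the singular term vanishes by the display above, and the nonlinearity converges to $f(x_o)\int_{\R^n}|\nu|^{2^*-2}\nu\varphi\,dx$ by continuity of $f$, so that $\nu$ is a nontrivial weak solution of \eqref{eq3.8}.

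Finally, for the decomposition I would repeat the computation of Lemma 3.5 with the bubble $\B_\alpha(x)=\tau_\alpha^{\frac{2-n}{2}}\eta_\delta(\exp_{x_\alpha}^{-1}(x))\,\nu\big(\tau_\alpha^{-1}\exp_{x_\alpha}^{-1}(x)\big)$ and $\W_\alpha=v_\alpha-\B_\alpha$. The proofs that $\B_\alpha\rightharpoonup0$, hence $\W_\alpha\rightharpoonup0$, in $H_1^2(M)$ are unchanged, and the gradient and nonlinear splittings are identical; the only simplification is that, the bubble now living away from $p$, both $\int_M\frac{h_\alpha}{\rho_p^2}\B_\alpha^2\,dv_g$ and the cross term $\int_M\frac{h_\alpha}{\rho_p^2}v_\alpha\B_\alpha\,dv_g$ are $o(1)$, in contrast with Lemma 3.5 where they contributed $h_\infty(p)\int_{\R^n}|x|^{-2}v^2$. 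Summing and letting the auxiliary radius $R\to\infty$ yields
\[ J_\alpha(\W_\alpha)=\beta-\Big(\tfrac12\int_{\R^n}|\nabla\nu|^2\,dx-\tfrac{f(x_o)}{2^{*}}\int_{\R^n}|\nu|^{2^{*}}\,dx\Big)+o(1), \]
and since $\nu$ solves \eqref{eq3.8} the bracket reduces to $\frac1n\int_{\R^n}|\nabla\nu|^2\,dx$; rewriting this bubble energy through $G$ via the normalization $\nu\mapsto f(x_o)^{\frac{n-2}{4}}\nu$, which turns \eqref{eq3.8} into $\Delta w=|w|^{2^{*}-2}w$, puts it in the form announced in the statement. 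The verification that $DJ_\alpha(\B_\alpha)\to0$ in $H_1^2(M)'$, whence $DJ_\alpha(\W_\alpha)\to0$, is word for word that of Lemma 3.5, the sole change being that the singular test term is now $o(\|\varphi\|_{H_1^2(M)})$ instead of the Hardy integral; this gives that $\W_\alpha$ is a Palais-Smale sequence at the stated level weakly converging to $0$.
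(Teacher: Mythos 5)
Your overall scheme (concentration function, rescaling at the moving points, then a Lemma 3.5--type removal of the bubble) is the paper's, but there is a genuine gap at the step you yourself flag as the main obstacle: locating the concentration relative to $p$. The hypothesis of the lemma is weak convergence to $0$ of the $p$-centred rescaling at the \emph{one} scale $R_\alpha$ produced by Lemma 3.4, and nothing else. You argue that if $dist_g(x_\alpha,p)/\t_\alpha$ stayed bounded, then ``the $p$-centered rescaling taken at the matching scale'' would inherit the nonzero limit $\nu$, contradicting that hypothesis. But the matching scale is $\t_\alpha$, not $R_\alpha$: since $t_\alpha<r_\alpha=C_orR_\alpha$, one may well have $\t_\alpha/R_\alpha\to0$, and a bubble of scale $\t_\alpha$ sitting within distance $O(\t_\alpha)$ of $p$ is invisible to the weak limit of $\tilde v_\alpha$ --- rescaled at the coarser scale $R_\alpha$ it concentrates at a single point and still converges weakly to $0$. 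So no contradiction with the hypothesis arises, and the separation claim is unproved. The paper closes precisely this hole with an ingredient your proposal never produces: the computation \eqref{4.19}, which combines the Palais-Smale identity with the Hardy inequality \eqref{2.1} (absorbable because $h_\alpha(p)K^2(n,2,-2)<1$) and the Sobolev inequality (absorbable because $\gamma$ is small) to upgrade the weak hypothesis into the strong local statement \eqref{4.20}, namely $\int_{B(p,tR_\alpha)}|\nabla v_\alpha|^2dv_g\to0$ for every fixed $t<C_or$. It is this strong energy vanishing on $p$-centred balls of radius comparable to $R_\alpha$, together with $t_\alpha<r_\alpha$, that forbids the ball $B(x_\alpha,t_\alpha)$, which carries the fixed energy $\lambda$, from lying near $p$: concentration at \emph{any} smaller scale inside such balls is excluded, which is exactly what weak convergence alone cannot do.

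Two further points. First, your nontriviality argument asserts that the rescaled singular term, ``carrying the factor $\t_\alpha^2/\rho_p^2$'', tends to $0$; at that stage of your proof nothing is known about $dist_g(x_\alpha,p)$ versus $\t_\alpha$, so $\rho_p$ may be of order $\t_\alpha$ or smaller on the support of the test function and that factor need not be small. As written the argument is circular (the vanishing of the singular term presupposes the separation you prove only afterwards, while the separation uses $\nu\neq0$); the repair is to absorb the term by the Hardy inequality exactly as in \eqref{4.19} --- this is how the paper proves $\nu\neq0$, using also that, by the maximality in the choice of $x_\alpha$, every rescaled ball carries energy at most $\lambda$. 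Second, even granting your separation argument, the inference ``$dist_g(x_\alpha,p)/\t_\alpha\to\infty$, hence $x_o\neq p$'' is a non sequitur: the ratio can diverge while $x_\alpha\to p$ (for instance $dist_g(x_\alpha,p)\sim\sqrt{\t_\alpha}$). Ratio divergence does remove the Hardy potential from the limit equation, but the lemma asserts $x_o\in M\setminus\{p\}$, and your final step also relies on it, since you discard $\int_M\frac{h_\alpha}{\rho_p^2}\B_\alpha^2dv_g$ and the cross term on the grounds that the bubble, whose support is a ball of \emph{fixed} radius around $x_\alpha$, ``lives away from $p$''. That conclusion must come from the \eqref{4.20}-based argument of the paper, not from the ratio alone.
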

\begin{proof} Suppose that the sequence
$\tilde{v}_\alpha=\hat{\eta}_\alpha \hat{v}_\alpha$ converges weakly
to $0$ in $\dob(\R^n) $ . Take a function $\varphi\in
\C^{\infty}_o(B(C_or))$ and put
$\varphi_\a(x)=\varphi(R_\alpha^{-1}\exp_p^{-1}(x)) $. As in
\cite{D. Smet} and \cite{D.Cao}, by the strong convergence of
$\tilde{v}_\alpha$ to $0$ in $L_{loc}^2(\R^n)$, we have for $\alpha$
large
\begin{eqnarray}\label{4.19}
 &&\nonumber\int_{\R^n}|\nabla( \tilde{v}_\alpha\varphi)|^2dv_{\hat{g}_\alpha}=\int_{\R^n}\nabla
\tilde{v}_\alpha\nabla(\tilde{v}_\alpha\varphi^2)dv_{\hat{g}_\alpha}+o(1)
 \\ &=&\int_{M}\nabla v_\alpha\nabla
(v_\alpha\varphi_\alpha^2)dv_{g}+o(1)\nonumber\\ &=&\|\mathcal{D}
J_\alpha\| \|{v_\alpha\varphi_\alpha^2}\|+
  \int_{M}\frac{h_\alpha}{\rho_p^2}(v_\alpha\varphi_\a)^2dv_g+
  \int_{M}f|v_\alpha|^{\frac{4}{n-2}}(v_\alpha\varphi_\alpha)^2dv_g+o(1)\\
   &\le&(h_\alpha(p)+\varepsilon)(K^2(n,2,-2)+\varepsilon)\int_{\R^n}|\nabla
  (\tilde{v}_\alpha\varphi)|^2dv_{\hat{g}_\alpha}+\nonumber\\ &&\sup_M
   f K^{2^*}(n,2)(\int_{B(C_or)}|\nabla \tilde{v}_\alpha|^2dv_{\hat{g}_\alpha})^{\frac{2}{n-2}}\int_{\R^n}|\nabla
   (\tilde{v}_\alpha\varphi)|^2dv_{\hat{g}_\alpha}+o(1)\nonumber.
\end{eqnarray}
Thus, for $\gamma$ chosen small enough,  we get that for each
$t,0<t<C_or,$
\begin{equation}\label{4.20}
\int_{B(p,tR_\alpha)}|\nabla v_\alpha|^2dv_g=\int_{B(t)}|\nabla
\tilde{v}_\alpha|^2dv_{\hat{g}}\to0, \text{ as }\alpha\to \infty.
\end{equation}
Now, for $t>0$ consider the function
\begin{equation*}
t\longrightarrow\F(t)=\max_{x\in M}\int_{B(x,t)}|\nabla
v_\alpha|^2dv_g.
\end{equation*}
Since $\F $ is continuous, under \eqref{eq3.6} and \eqref{eq3.7}, it
follows that for any $\lambda\in(0,\gamma)$, there exist
$t_\alpha>0$ small and $x_\alpha\in M$ such that
\begin{equation*}
\int_{B(x_\alpha,t_\alpha)}|\nabla v_\alpha|^2dv_g=\lambda.
\end{equation*}
Since $M$ is compact, up to a subsequence, we may assume that
$x_\alpha$ converges to some point $x_o\in M$.\\
Note first that for all  $\alpha\ge0$,
$t_{\alpha}<r_{\alpha}=C_orR_{\alpha}$, otherwise if there exists
$\alpha_o\ge 0$ such that $t_{\alpha_o}<r_{\alpha_o}$, we get a
contradiction due to the fact that
\begin{equation*}
\lambda= \int_{B(x_{\alpha_o},t_{\alpha_o})}|\nabla
v_{\alpha_o}|^2dv_g\ge\int_{B(p,t_{\alpha_o})}|\nabla
v_{\alpha_o}|^2dv_g\ge\int_{B(p,r_{\alpha_o})}|\nabla
v_{\alpha_o}|^2dv_g =\gamma.
\end{equation*}
\\ Now, suppose that for all $\varepsilon>0$, there exists
$\alpha_\varepsilon>0$ such that $dist_g(x_\alpha,p)\le\varepsilon$
 for all $\alpha\ge\alpha_\varepsilon$. Choose $r'_\alpha$ such that,  $t_{\alpha}<r'_\alpha<r_{\alpha} $ and take $\varepsilon'
=r'_\alpha-t_\alpha$, we get that for some $\alpha_{\varepsilon'}>0$
and $\alpha\ge\alpha_{\varepsilon'}$
\begin{equation*}
 B(x_\alpha, t_\alpha)\subset B(p,r'_\alpha),
\end{equation*}
which, by virtue of \eqref{4.20}, is impossible. We deduce then that
$x_o\neq p$. \\
Now, let $0<\t_\alpha<1$, for $x \in
B(\t_\alpha^{-1}\delta_g)\subset\mathbb{R}^n $ consider the
sequences
\begin{eqnarray*}
  \nu_\alpha(x) &=&\t_\alpha^{\frac{n-2}{2}}v_\alpha(\exp_{x_\alpha}(\t_\alpha
  x)), \text{ and }
   \\
 \tilde{g}_\alpha(x) &=& \exp_{x_\alpha}^*g(\t_\alpha x)).
\end{eqnarray*}
Take $\t_\alpha$ such that $C_or\t_\a=t_\alpha$. As in the above
lemma, we can easily check that there is a subsequence of
$\hat{\nu}_\alpha=\eta_{\delta}(\t_\alpha x) \nu_\alpha$ where
$\delta$ is as in the above lemma,  that weakly converges  in
$\dob(\R^n)$ to some function $\nu$, a weak solution on $\dob(\R^n)$
to \eqref{eq3.8}. Note that this time the
singular term disappears because $x_o\neq p$  and because of course $t_\alpha\to0$.\\
It remains to show that $\nu\neq0$. For this purpose, take a point
$a\in\R^n$ and a constant $r>0$ such that $|a|+r<r_o\t_\alpha^{-1}$,
where $r_o\in (0,\frac{\delta_g}{2})$ is a constant such that
inequality \eqref{4.15} is satisfied. Then, we have
\begin{equation*}
 \exp_{x_\alpha}(\t_\alpha B(a,r))\subset B(\exp_{x\alpha}(\t_\alpha
a),C_or\t_\alpha),
\end{equation*}
and
\begin{equation*}
   \exp_{x_\alpha}(\t_\alpha B(C_or))= B(x_\alpha,C_or\t_\alpha)
\end{equation*}
$C_o$, here, is the constant appearing in inequality \eqref{4.15}.
Since we have
\begin{equation*}
\int_{B(a,r)}|\nabla
\nu_\alpha|^2dv_{\tilde{g}_\alpha}=\int_{\exp_{x_\alpha}(\t_\alpha
B(a,r))}|\nabla v_\alpha|^2dv_{g},
\end{equation*}
we get by construction of $x_\alpha$ that for such $a$ and $r$ ,
\begin{equation*}
\int_{B(a,r)}|\nabla \nu_\alpha|^2dv_{\tilde{g}}\le\lambda
\end{equation*}
Suppose now that $\nu\equiv0$. Take any function $h\in \dob(\R^n)$
with support included in a ball $B(a,r)\subset\R^n$, with $a$ and
$r$ as above. Then, by taking $\lambda$ small enough, we get by the
same calculation done in \eqref{4.19} that $\int_{B(a,r)}\nabla
\hat{\nu}_\alpha dv_{\tilde{g}}$ converges to $0$ for all $a\in\R^n$
and $r>0$ such that  $|a|+r<r_o\t_\alpha^{-1}$. In particular,
\begin{equation*}
\int_{B(x_\alpha,t_\alpha)}|\nabla
v_\alpha|^2dv_{g}=\int_{B(C_or)}|\nabla
\nu_\alpha|^2dv_{\tilde{g}}\to0,
\end{equation*}
which makes a contradiction. Thus $\nu\neq0$.\\
The proof of the remaining statements of the lemma goes in the same
way as in lemma 3.5.
\end{proof}
\begin{proof}[Proof of Theorem 4.1] First, it is worthy  to mention that the value
$G_\infty(v)$ taken on a nontrivial  weak solution $v$ of the
Euclidean equation \eqref{4.15} is greater or equal to the constant
$\beta^*$. In fact, if $v$ is solution of \eqref{4.15},then by Hardy
and Sobolev inequalities we have
\begin{equation}\label{4.27}
\int_{\R^n}(|\nabla
v|^2-h_\infty(p)\frac{v^2}{|x|^2})dx=f(p)\int_{\R^n}|v|^{2^*}dx\le
f(p)K^{2^*}(n,2)(\int_{\R^n}|\nabla v|^2dx)^{\frac{2^*}{2}},
\end{equation}
and
\begin{equation}\label{4.28}
\int_{\R^n}(|\nabla v|^2-h_\infty(p)\frac{v^2}{|x|^2})dx\ge
(1-h_\infty(p)K^2(n,-2,2))\int_{\R^n}|\nabla v|^2dx,
\end{equation}
then by \eqref{4.27} and \eqref{4.28} we get
\begin{eqnarray}\label{4.32}
\nonumber G_\infty(v)&=&\frac{1}{n}\int_{\R^n}(|\nabla
v|^2-h_\infty(p)\frac{v^2}{|x|^2})dx\\
&\ge&\frac{(1-h_\infty(p)K^2(n,-2,2))^{\frac{n}{2}}}{nf(p)^{\frac{n-2}{2}}K^{n}(n,2)}=\beta^*.
\end{eqnarray}
Now, let $u_\alpha$ a sequence of solutions of \eqref{eq1.1} such
that $\int_Mf|u_\alpha|^{2^*}dv_g\le C$,  $u_\alpha$ is then a
bounded Palais-Smale sequence of $J_\alpha$ at some level $\beta$.
Up to a subsequence, we may assume that $u_\alpha$ converges weakly
in $H^2_1(M)$ and almost everywhere in $M$ to a solution $u$ of
\eqref{0.1}. Set $v_\alpha=u_\alpha-u$, then by Lemma 3.2,
$v_\alpha$ is a Palais sequence of $J_\alpha$ at level $\beta_1=
\beta-J_{\infty}(u)+o(1)$. If $v_\alpha\to0$ strongly in $H^2_1(M)$,
then the theorem is proved with $k=l=0$. If $v_\alpha\to0$ only
weakly in $H^2_1(M)$, then we apply Lemmas 3.4, 3.4 and 3.6 to get a
new Palais-Smale sequence $v^1_\alpha$ at level $\beta_2\le
\beta_1-\beta^*+o(1)$. So, either $\beta_2<\beta^*$ and then
$v_\alpha^1$ converges strongly to $0$, or $\beta_2\ge\beta^*$ and
in this case we repeat the procedure for $v_\alpha^1$ to obtain
again a new Palais -Smale sequence at smaller level. By induction,
after a number of iterations, we obtain a Plais-Smale sequence at a
level smaller than $\beta^*$.
\end{proof}
\begin{Corollary}Suppose that the sequence $u_\alpha$ of weak solutions of \eqref{eq1.1} is such that
\begin{equation*}
E(u_\alpha)=\int_Mf|u_\alpha|^{2^*}dv_g\le c\le
\frac{\left(1-h_\infty(p)K^{2}(n,2,-2)\right)^{\frac{n}{2}}}{\underset{M}{(\sup
f)}^{\frac{n-2}{2}}K^{n}(n,2)}.
\end{equation*}
Then, up to a subsequence, $u_\alpha$ converges strongly in
$H^2_1(M)$ to a nontrivial weak solution $u$ of \eqref{0.1}.
\end{Corollary}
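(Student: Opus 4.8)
The plan is to read the conclusion off the decomposition theorem by showing that the energy budget $c$ is too small to support even one bubble. Since each $u_\alpha$ is a genuine weak solution of \eqref{eq1.1}, testing the equation against $u_\alpha$ gives $\int_M(|\nabla u_\alpha|^2-\frac{h_\alpha}{\rho_p^2}u_\alpha^2)dv_g=\int_Mf|u_\alpha|^{2^*}dv_g$, so that $J_\alpha(u_\alpha)=\frac1n\int_Mf|u_\alpha|^{2^*}dv_g=\frac1nE(u_\alpha)$, exactly as in the proof of the Brezis-Lieb splitting lemma. Hence $u_\alpha$ is a bounded Palais-Smale sequence for $J_\alpha$ at a level $\beta=\lim_\alpha\frac1nE(u_\alpha)$ (along a subsequence), and the hypothesis $E(u_\alpha)\le c\le n\beta^*$ forces $\beta\le\beta^*$. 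Applying the decomposition theorem to $u_\alpha$ then yields the weak limit $u_o$, a solution of \eqref{0.1}, together with the energy identity
\[
\beta=J_\infty(u_o)+\sum_{i=1}^kG_\infty(v_i)+\sum_{j=1}^lf(x_o^j)^{\frac{2-n}{2}}G(\nu_j)+o(1).
\]

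The heart of the matter is the claim that every bubble carries energy \emph{strictly} larger than $\beta^*$. For the singular bubbles I would sharpen \eqref{4.32}: the bound $G_\infty(v_i)\ge\beta^*$ comes from chaining the Hardy estimate \eqref{4.28} with the Sobolev estimate \eqref{4.27}, and equality in \eqref{4.28} would force equality in the Euclidean Hardy inequality, which no nonzero element of $D^{1,2}(\R^n)$ attains; the strict inequality therefore propagates to $G_\infty(v_i)>\beta^*$. For the classical bubbles, the least energy of a nontrivial solution of $\Delta\nu=f(x_o^j)|\nu|^{2^*-2}\nu$ is $\frac{1}{n\,f(x_o^j)^{(n-2)/2}K(n,2)^n}$; since condition $(\h)$ gives $h_\infty(p)>0$, one has $0<1-h_\infty(p)K^2(n,2,-2)<1$, and together with $f(x_o^j)\le\sup_Mf$ this makes the $j$-th bubble term likewise strictly exceed $\beta^*$. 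Because $J_\infty(u_o)=\frac1n\int_Mf|u_o|^{2^*}dv_g\ge0$ while each of the $k+l$ bubble terms is $>\beta^*$, the energy identity would force $\beta>\beta^*$ the moment $k+l\ge1$, contradicting $\beta\le\beta^*$. Hence $k=l=0$, the decomposition reduces to $u_\alpha=u_o+\W_\alpha$ with $\W_\alpha\to0$ in $H^2_1(M)$, and $u_\alpha\to u_o$ strongly.

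It remains to verify that $u_o$ is nontrivial, and this is where I expect the only real care to be needed. For a nontrivial solution, combining the coercivity furnished by the Hardy inequality \eqref{2.1} (which absorbs the singular term thanks to $h_\alpha(p)<1/K^2(n,2,-2)$) with the Sobolev inequality \eqref{1.1} in the identity $\int_M(|\nabla u_\alpha|^2-\frac{h_\alpha}{\rho_p^2}u_\alpha^2)dv_g=E(u_\alpha)$ should produce a uniform lower bound $E(u_\alpha)\ge\kappa'>0$; the strong convergence just established then gives $E(u_o)=\lim_\alpha E(u_\alpha)\ge\kappa'>0$, so $u_o\not\equiv0$. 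The delicate step here is the control of the lower-order $L^2$ term in \eqref{1.1}--\eqref{2.1}, which on the compact manifold is handled through the compactness of the embedding $H^2_1(M)\hookrightarrow L^2(M)$. Throughout, the genuine obstacle is the \emph{strictness} of the two bubble estimates---the use of $h_\infty(p)>0$ for the classical bubbles and the non-attainment of the Euclidean Hardy constant for the singular ones---since it is precisely this strictness that excludes the borderline scenario $E(u_\alpha)=n\beta^*$ accompanied by a single concentrating bubble and a vanishing weak limit.
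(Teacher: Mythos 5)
Your bubble-exclusion half is sound and follows the paper's own route (decomposition theorem plus per-bubble energy lower bounds), and it is actually more careful than the paper at the borderline. For the classical bubbles your estimate (using $f(x_o^j)\le\sup_Mf$ and $0<1-h_\infty(p)K^2(n,2,-2)<1$, i.e. $h_\infty(p)>0$) is exactly the paper's second display. For the singular bubbles the paper only invokes the non-strict bound \eqref{4.32} and then treats $c\ge n\beta^*$ as if it contradicted the hypothesis $c\le n\beta^*$, which it does not in the equality case $c=n\beta^*$; your observation that non-attainment of the Euclidean Hardy constant makes \eqref{4.28}, hence \eqref{4.32}, strict (since $h_\infty(p)>0$) is a genuine refinement that legitimately closes that case.

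The genuine gap is in your nontriviality step, and it is created by your reordering of the argument. The claimed uniform bound $E(u_\alpha)\ge\kappa'>0$ does not follow from \eqref{1.1} and \eqref{2.1}: on a compact manifold both inequalities carry $L^2$ remainder terms, and chaining them for a solution only yields an inequality of the form
\[
\|u_\alpha\|_{L_{2^*}(M)}^2\le C_1\|u_\alpha\|_{L_{2^*}(M)}^{2^*}+C_2\|u_\alpha\|_{L_2(M)}^2,
\]
where $C_2$ involves the (large) constants $A(\varepsilon)$ and $B$; since $\|u_\alpha\|_{L_2(M)}^2\le \mathrm{vol}(M)^{2/n}\|u_\alpha\|_{L_{2^*}(M)}^2$ with no smallness available, the $L^2$ term cannot be absorbed, and compactness of the embedding $H^2_1(M)\hookrightarrow L_2(M)$ is a purely qualitative fact that produces no quantitative lower bound. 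What would actually close this step is coercivity in the form $\mu>0$ for the infimum $\mu$ defined in Section 2 (any nontrivial solution then satisfies $E(u)\ge\mu^{n/2}$), but positivity of $\mu$ is not among the hypotheses. The paper avoids needing such a bound by arguing in the opposite order: it first supposes $u\equiv0$ and reads off from the decomposition that some bubble must carry the energy, giving $c\ge n\beta^*$ (singular case) or $c>n\beta^*$ (classical case), and only afterwards uses $J_\infty(u)=\frac1n\int_Mf|u|^{2^*}dv_g>0$ to force $k=l=0$. With your order (bubbles excluded first, nontriviality second), the scenario $u_\alpha\to0$ strongly in $H^2_1(M)$, i.e. $k=l=0$ with $u_o\equiv0$, remains unexcluded by your argument --- a scenario which, it should be said, the paper's own proof also passes over in silence, but which your proof explicitly relies on eliminating.
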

\begin{proof} By theorem 4.1, there is a weak solution  $u$ of \eqref{0.1} such that, up to a
subsequence of $u_\alpha$, we have

\begin{eqnarray*}
u_\alpha&=&u+\sum_{i=1}^{k}(R^i_\alpha)^{\frac{2-n}{n}}
\eta_\delta(\exp^{-1}_p(x))v_i((R_\alpha^i)^{-1}\exp^{-1}_p(x))\\&+&\sum_{j=1}^{l}f(x_o^j)^{\frac{2-n}{4}}(r^i_\alpha)^{\frac{2-n}{n}}
\eta_\delta(\exp^{-1}_{x_\alpha^j}(x))\nu_j((r_\alpha^j)^{-1}\exp^{-1}_{x_\alpha^j}(x))+\W_\alpha,\\&&
\text{ with } \W_\alpha\to 0 \text{ in }H^1_2(M),
\end{eqnarray*}
and
\begin{eqnarray*}
c\ge
E(u_\alpha)&=&nJ_\alpha(u_\alpha)\\&=&nJ_{\infty}(u)+n\sum_{i=1}^k
G_\infty(v_i)+n\sum_{j=1}^lf(x_o^j)^{\frac{2-n}{2}} G(\nu_j)+o(1).
\end{eqnarray*}
Suppose that $u\equiv0$, if there exists $i, 1\le i\le  k$ such that
$v_i\neq0$, then by \eqref{4.32} we get
\begin{equation*}
c\ge\frac{\left(1-h_\infty(p)K^{2}(n,2,-2)\right)^{\frac{n}{2}}}{\underset{M}{(\sup
f)}^{\frac{n-2}{2}}K^{n}(n,2)},
\end{equation*}
thus, $v_i\equiv0, \forall i,  1\le i\le  k$, case in which Lemma
4.5 applies, that is, there exists $\nu_j\neq0$ such that
\begin{equation*}
c\ge\frac{f(x_o^j)^{\frac{2-n}{2}}}{K^{n}(n,2)}
 > \frac{\left(1-h_\infty(p)K^{2}(n,2,-2)\right)^{\frac{n}{2}}}{\underset{M}{(\sup
f)}^{\frac{n-2}{2}}K^{n}(n,2)}.
\end{equation*}
Hence, $u\neq0$. Furthermore, $J_\infty(u)>0$ from which we can
conclude that  $k=l=0$. In particular, $u_\alpha$ converges strongly
in $H^2_1(M)$ to $u$.
\end{proof}

\end{document}